\documentclass[12pt]{article}
\usepackage[utf8]{inputenc}
\usepackage[T1]{fontenc}
\usepackage[french, english]{babel}
\usepackage{amsmath,amssymb}
\usepackage{amsthm}
\usepackage{geometry}
\usepackage{hyperref}
\usepackage{authblk}
\usepackage{titlesec}

\newtheorem{theorem}{Theorem}[section]

\newtheorem{lemma}[theorem]{Lemma}

\newtheorem{definition}{Definition}[section]
\newtheorem{proposition}{Proposition}[section]
\newtheorem{remark}{Remark}[section]
\newtheorem{remark definition}{Remark Definition}[section]
\titleformat{\section}{\large\bfseries}{\thesection.}{1em}{}

\title{Generic three-parameter unfolding of a completely degenerate germ of a vector field on $\mathbb{R}^3$}
\author[Samuel Diangitukulu Ndimba$^{1,}$, Aimable Sayinzoga$^{3}$, Bertuel Tangue Ndawa$^{4,5}$, Jonathan Ibia Kobongo$^{2,3}$, Joseph Mpulu Kikwasini$^{2,3}$]{
\textbf{Samuel Diangitukulu Ndimba$^{1,}$, Aimable Sayinzoga$^{3}$, Bertuel Tangue Ndawa$^{4,5}$,}\\\textbf{Jonathan Ibia Kobongo$^{2,3}$, Joseph Mpulu Kikwasini$^{2,3}$}\\
$^{1}$Department of Petroleum Resources Management and Economics, Faculty of Petroleum, Gas and Renewable Energies,\\ University of Kinshasa, Democratic Republic of the Congo,\\ ORCID iD: https://orcid.org/0000-0001-6755-8859\\
$^{2}$Center for Research on Mathematics Education in the Democratic Republic of the Congo,\\ORCID iD: https://orcid.org/0009-0009-2810-1739\\ ORCID iD: https://orcid.org/0009-0004-2147-6320\\
$^{3}$Department of Mathematics, Statistics, and Computer Science, Faculty of Science and Technology, \\University of Kinshasa, Democratic Republic of the Congo,\\ORCID iD: https://orcid.org/0009-0009-2810-1739\\ ORCID iD: https://orcid.org/0009-0004-2147-6320\\
$^{4}$Department of Computer Engineering, University Institute of Technology, University of Ngaoundere, Cameroon,\\ ORCID iD: https://orcid.org/0000-0001-8995-9522\\
$^{5}$Institut des Hautes Études Scientifiques, Université Paris-Saclay, France,\\ ORCID iD: https://orcid.org/0000-0001-8995-9522\\
}
\begin{document}
\maketitle

\begin{center}
	\textit{This article was initiated by the late Professor Aimable Sayinzoga.
		Although the anticipated results could not be published during his lifetime,
		significant findings have since been obtained. These results are now being
		published as a tribute to his fundamental contributions, which is why his
		name remains among the authors.}
\end{center}

\vspace{1em}

\begin{abstract}
	In this work, we analyze a completely degenerate germ of a real vector field
	at the origin of $\mathbb{R}^3$. We construct a generic three-parameter
	unfolding of this germ and identify the associated local bifurcations. We
	prove that the phase portrait of the unfolded system contains a
	one-parameter family of heteroclinic orbits connecting two hyperbolic
	equilibrium points.
	
	\medskip
	
	\noindent\textbf{Keywords:} unfolding, induced unfolding, local bifurcation
	set, equivalence and conjugacy of unfoldings.
	
	\medskip
	
	\noindent\textbf{AMS Mathematics Subject Classification:} 37C10, 37G10,
	34C23, 58K05, 58K45.
\end{abstract}

\section{Introduction}

The classification of singular vector fields of codimension three in
$\mathbb{R}^n$, with $n\geq 3$, is a central topic in singularity theory.
In particular, singular germs of vector fields on $\mathbb{R}^3$ whose linear
jet is nilpotent have been extensively studied; see, for instance, \cite{i}.
Such germs arise naturally in three-parameter unfoldings of singular germs of
codimension three defined on $\mathbb{R}^n$. Moreover, singular vector fields
with nilpotent linear part in $\mathbb{R}^3$ occur in the modelling of various
physical and biological phenomena, notably reaction--diffusion systems and the
propagation of nerve impulses in neurons.

Up to an affine isomorphism, we may assume that the first-order jet of the
singular germ $X_0$ at the origin is given by
$$
j_1X_0(0)
=
x_2\frac{\partial}{\partial x_1}
+
x_3\frac{\partial}{\partial x_2}.
$$
It is therefore sufficient to study the generic unfolding, at the origin of
$\mathbb{R}^3$, of the vector field
$$
X_0
=
x_2\frac{\partial}{\partial x_1}
+
x_3\frac{\partial}{\partial x_2}
+
f_0(x_1,x_2,x_3)\frac{\partial}{\partial x_3},
$$
where $f_0\in C^l(\mathbb{R}^3)$, $l\geq 5$, satisfies
$$
j_1f_0(0)=0,
\qquad
\frac{\partial^2 f_0}{\partial x_1^2}(0)=a\neq 0.
$$
After applying an automorphism of the form $\varphi(x)=|a|x$, followed by the
symmetry $T(x)=-x$, we may assume, without loss of generality, that $a=2$.
Thus, we consider a germ $X_0$ whose second-order jet satisfies
$$
j_1f_0(0)=0,
\qquad
j_2f_0(0)=\sum_{i,j=1}^{3}a_{ij}x_ix_j,
$$
with
$$
a_{11}=1,
\qquad
a_{ij}=a_{ji},
\qquad
i,j=1,2,3.
$$

When $f_0(x)=x_1^2$, the phase portrait of the germ $(X_0,0)$ contains only
one invariant manifold $\gamma$ passing through the origin. This manifold is of
class $C^1$, but it is not of class $C^2$ along the $x_1$-axis at the origin;
see \cite{iv}. The purpose of the present work is to construct a generic
three-parameter unfolding of the completely degenerate germ $X_0$ and to
analyze the local bifurcations arising from it. In particular, we establish the
existence of a family of heteroclinic orbits, which illustrates the richness of
the phase portrait near the origin.

Before stating our results in a precise form and proceeding to their
proofs, we first recall the necessary definitions, fix the notation to be
used throughout the paper, and collect several preliminary results that
will be needed in the sequel.
	       	
	\section{Preliminaries}
	\label{Sect2_GPU}
	We recall in this section the notions and notation that will be used throughout
	the paper. For further details, we refer the reader to
	\cite{FR,GP,PL,i,iv,v}.
	
	\subsection{Definitions and notation}
	
	Throughout this paper, $k$ and $\ell$ are positive integers, and $M$ is an
	$n$-dimensional $C^m$-manifold, with $m\geq \ell$ and $n\geq 1$. We denote by
	$
	p_1:\mathbb{R}^k\times M\longrightarrow \mathbb{R}^k,
	\qquad
	p_2:\mathbb{R}^k\times M\longrightarrow M
	$
	the canonical projections. A local coordinate system around a point $x\in M$
	will be denoted by $(x_1,\dots,x_n)$. If $a\in \mathbb{R}^r$, $r\geq 1$, we
	write
	$
	a=(a_1,\dots,a_r).
	$
	Moreover, $C^m(M,S_n(\mathbb{R}))$ denotes the space of $C^m$-maps from $M$
	to the vector space $S_n(\mathbb{R})$ of real symmetric $n\times n$ matrices.
	
	\begin{definition}[Unfolding]
		Let $X_0\in \mathfrak{X}^{\ell}(M)$ be a vector field on $M$. A
		$k$-parameter $C^\ell$-unfolding of $X_0$ is a $C^\ell$-map
		$
		X:\mathbb{R}^k\times M\longrightarrow TM
		$
		such that
		$
		X(0,x)=X_0(x)$, $x\in M,$
		and, for each fixed $\lambda\in \mathbb{R}^k$, the map
		$
		x\longmapsto X(\lambda,x)
		$
		defines a vector field $X_\lambda\in \mathfrak{X}^{\ell}(M)$.
		
		We denote by
		$
		\mathfrak{X}^{\ell}(M\mid \mathbb{R}^k)
		$
		the set of all $k$-parameter $C^\ell$-unfoldings of vector fields on $M$.
	\end{definition}
	
	\begin{remark}
		The preceding definition also applies when the parameter space
		$\mathbb{R}^k$ is replaced by an open neighborhood of $0$ in
		$\mathbb{R}^k$.		
		If $X\in \mathfrak{X}^{\ell}(M\mid \mathbb{R}^k)$ and
		$\lambda\in \mathbb{R}^k$ is fixed, then $X_\lambda$ denotes the vector
		field on $M$ given by
		$
		(X_\lambda)_x=X(\lambda,x),
		\;
		x\in M.
		$
	\end{remark}
	
	\begin{definition}[$C^s$-equivalent and $C^s$-conjugate unfoldings]
		Let $X,Y\in \mathfrak{X}^{\ell}(M\mid \mathbb{R}^k)$. We say that $X$ and
		$Y$ are $C^s$-equivalent if there exist a $C^s$-diffeomorphism
		$h:\mathbb{R}^k\longrightarrow \mathbb{R}^k$
		with $h(0)=0$, and a $C^s$-map $
		\phi:\mathbb{R}^k\times M\longrightarrow M$
		such that, for each $\lambda\in \mathbb{R}^k$, the map $
		\phi_\lambda:M\longrightarrow M$,
		$\phi_\lambda(x)=\phi(\lambda,x),$
		is a $C^s$-diffeomorphism satisfying 
		$$\phi_0=Id \quad\text{and}\quad
		Y_{h(\lambda)}
		=
		(\phi_\lambda)_*X_\lambda,
		\qquad
		\lambda\in \mathbb{R}^k.
		$$
		Equivalently, if
		$$
		\Phi:\mathbb{R}^k\times M\longrightarrow \mathbb{R}^k\times M,
		\qquad
		\Phi(\lambda,x)=(h(\lambda),\phi(\lambda,x)),
		$$
		with the following properties
		$$
		\Phi(0,)=(0,Id)\quad\text{and} \quad (Y_{h(\lambda)})_{\phi(\lambda,x)}
		=
		(\phi_\lambda)_*x(X_\lambda)_x.
		$$
		
		If, moreover, $
		h(\lambda)=\lambda$, $\lambda\in \mathbb{R}^k,$
		then $X$ and $Y$ are said to be $C^s$-conjugate.
	\end{definition}
	
	\begin{remark}
		The relations of $C^s$-equivalence and $C^s$-conjugacy are equivalence
		relations on $\mathfrak{X}^{\ell}(M\mid \mathbb{R}^k)$. It can be see as an action of $C^s$-diffeomorphisms on $
		\mathfrak{X}^{\ell}(M\mid \mathbb{R}^k)
		$. A similar action, without parameters, on the set of Cherry vector fields was studied in \cite{TNB4}. This action induces, in turn, a conjugation action on
		the set of circle maps with a flat interval.
	\end{remark}
	
	\begin{definition}[Germs and jets]
		Let $X_0$ be a vector field on $M$, and let $x_0\in M$. The germ of
		$X_0$ at $x_0$ is the equivalence class of the pair $(X_0,x_0)$, where two
		vector fields are identified if they coincide on an open neighborhood of
		$x_0$. If $X_0(x_0)=0,$ then the germ $(X_0,x_0)$ is called a singular germ. 
		Similarly, if $X\in \mathfrak{X}^{\ell}(M\mid \mathbb{R}^k)	$
		and $(\lambda_0,x_0)\in \mathbb{R}^k\times M$ satisfies $X_{\lambda_0}(x_0)=0,
		$
		then one defines the singular germ of the unfolding $X$ at
		$(\lambda_0,x_0)$.
		
		Let $\ell'\leq \ell$. The $\ell'$-jet of a $C^\ell$-vector field at a
		point records the values of the component functions of the vector field and
		all their partial derivatives up to order $\ell'$ at that point. More
		precisely, two vector fields are said to have the same $\ell'$-jet at
		$x_0$ if, in local coordinates, their component functions have the same
		partial derivatives up to order $\ell'$ at $x_0$.
		
		In this work, following \cite[Def.~1.2]{v}, we shall mainly consider jets
		of vector fields on $\mathbb{R}^3$ at the origin. If $X_0$ is a vector
		field on $\mathbb{R}^3$ such that $X_0(0)=0$, we denote its $\ell'$-jet by
		$J^{\ell'}X_0=J^{3,\ell'}X_0.$
		The set of all such $\ell'$-jets is denoted by $J^{\ell'}(\mathbb{R}^3).$
		For a function $f:\mathbb{R}^3\to \mathbb{R}$, the $\ell'$-jet of $f$ at
		the origin is denoted by $J^{\ell'}f=J^{3,\ell'}f.$		
		Formally, the $\ell'$-jet of a function, respectively of a vector field,
		is represented by the truncation at order $\ell'$ of its Taylor expansion,
		respectively of the Taylor expansions of its component functions.
		
		These notions extend naturally to unfoldings. If $
		X\in \mathfrak{X}^{\ell}(\mathbb{R}^3\mid \mathbb{R}^k),$
		then the $\ell'$-jet unfolding $\widetilde{J}^{\ell'}X$ is defined by
		$(\widetilde{J}^{\ell'}X)_\lambda=J^{\ell'}X_\lambda.$
		Thus, an unfolding $Y$ induces a map
		$$
		\widetilde{J}^{\ell'}Y:
		\mathbb{R}^k\longrightarrow J^{\ell'}(\mathbb{R}^3),
		\qquad
		\lambda\longmapsto J^{\ell'}Y_\lambda.
		$$
		This map allows one to express the genericity of the family $Y$ in terms of
		transversality to a suitably chosen submanifold of
		$J^{\ell'}(\mathbb{R}^3)$; see \cite{iv}.
	\end{definition}
	
	\begin{definition}[$C^s$-equivalent and $C^s$-conjugate singular germs of unfoldings]
		Let $u=(\lambda_0,x_0)$, $v=(\mu_0,y_0)$ be points of $\mathbb{R}^k\times M$, and let $X,Y\in \mathfrak{X}^{\ell}(M\mid \mathbb{R}^k)$ be such that $X_{\lambda_0}(x_0)=0$, $Y_{\mu_0}(y_0)=0.$
		The singular germs $(X,u)$ and $(Y,v)$ are said to be $C^s$-equivalent
		if there exist open neighborhoods $U$ of $u$ and $V$ of $v$, together with
		a $C^s$-diffeomorphism $\Phi:U\longrightarrow V$, 		$\Phi(\lambda,x)=(h(\lambda),\phi(\lambda,x))$,
		such that
		\begin{enumerate}
			\item $\Phi(u)=v$;
			\item $\Phi$ induces a $C^s$-equivalence between the restrictions of
			$X$ and $Y$ to $U$ and $V$, respectively.
		\end{enumerate}
		If, in addition, $h(\lambda)=\lambda$, then the germs $(X,u)$ and $(Y,v)$
		are said to be $C^s$-conjugate.
	\end{definition}
	
\begin{definition}[Induced unfolding]
Given two unfoldings $X$ and $Y$, we say that the $C^l$-unfolding $Y$ is induced by $X$ if there exists a $C^l$-smooth map $h: \mathbb{R}^k \to \mathbb{R}^k$ such that $Y(x, \lambda) = X(h(\lambda),x)$ for all $(\lambda,x) \in \mathbb{R}^k \times M$. And, the germ $(X,u)$ induces $(Y,v)$ if there exists an open neighborhood $U$ of $u$, an open neighborhood $V$ of $v$, and a $C^s$-map $h : p_1(V) \to h\left(p_1(V)\right)\subset p_1(U)$ such that  
\begin{enumerate}
	\item $h \circ p_1 (v) = p_1(u)$;
	\item $Y(\lambda, x) = X(h(\lambda), x)$ for all $(\lambda, x)\in \mathbb{R}^k \times M$.
\end{enumerate}
An unfolding of a germ $(X_0, x_0)$ is a germ $(Y, (0, x_0))$ of an unfolding $Y$ of a representative of $(X_0, x_0)$, see \cite{iv}.
\end{definition}

\begin{definition}[Bifurcation of an unfolding]
	Let $X\in \mathfrak{X}^l (M\vert \mathbb{R}^{k})$.
\begin{enumerate}
	\item[-] A vector $\lambda\in \mathbb{R}^k$ is a bifurcation value of $X$ when $X_\lambda$ is not structurally stable. The bifurcation set of $X$ is the set of bifurcation values of $X$.
	\item[-] An ordered pair  $(\lambda_0 , x_{0}) \in \mathbb{R}^k \times M$ is a bifurcation point if the singular germ $(X_{\lambda_0} , x_{0} )$ is structurally unstable.
	\item[-] The bifurcation diagram of $X$ consists of the bifurcation points, it is composed of invariant sets of $X$ where there is a lack of structural stability.
\end{enumerate}
\end{definition}
		
	\section{Generic unfoldings of singular germs of vector fields on $\mathbb{R}^3$ of codimension 3}\label{Sect3_GPU}
	\subsection{Three-parameter generic unfoldings}
We fix an integer $k\geq 3$,  the maps $f_0\in C^m(\mathbb{R}^3)$, $f, f_1,f_2,f_3\in C^{\infty}( \mathbb{R}^k\times\mathbb{R}^3)$ and an unfolding $X\in \mathfrak{X}^m (\mathbb{R}^3\vert \mathbb{R}^k )$ of a vector field $X_0 \in \mathfrak{X}^l (\mathbb{R}^3)$ such that  for every $x=(x_1 , x_2 , x_3 )\in \mathbb{R}^3$ and $\lambda\in \mathbb{R}^k$ the following holds: 
\begin{equation}\label{GSUH1}\tag{H1}
\begin{array}{l}
X_0=x_2\frac{\partial}{\partial x_1}+x_3\frac{\partial}{\partial x_2} + f_0 (x)\frac{\partial}{\partial x_3},
 \\f_0(x) = \sum_{i,j=1}^{3} x_i x_j a_{ij}(x),\; (a_{ij})\in C^m(M, S_n(\mathbb{R})),\mbox{ and } a_{11}(0)=1;
\end{array}
\end{equation}

\begin{equation}\label{GSUH2}\tag{H2}
\begin{array}{l}
X_\lambda (x)= x_2 \frac{\partial}{\partial x_1}+x_3 \frac{\partial}{\partial x_2} + f(\lambda,x)\frac{\partial}{\partial x_3}+\sum_{i=1}^{3} f_i(\lambda,x)\frac{\partial}{\partial x_i}, \\f_i (0, \cdot )=0,\quad f (0, \cdot )=f_0(\cdot);
\end{array}
\end{equation}		
\begin{equation}\label{GSUH3}\tag{H3}
\begin{array}{l}f (\lambda , x)+ f_3 (\lambda , x) = f_{30} (\lambda)+x_1 f_{31}(\lambda ) + x_{1}^{2}f_{32}(\lambda ,x_1)+x_2 h_1 (\lambda , x)\\
\hspace{3.6cm}+ x_3 h_2 (\lambda ,x ),\\ 
f_{3i},\; h_1\; h_2\in C^m\left(\mathbb{R}^k\times\mathbb{R}^3\right),\;  i=1,2,3,\;f_{32} (0,0)=1,\\  f_{30}(0,0) = f_{3 1}(0,0) =h_i (0,0),\; i=1,2\end{array}
\end{equation}

\begin{proposition}
	Let $X\in \mathfrak{X}^{\ell}(\mathbb{R}^3\mid \mathbb{R}^k)$
	be an unfolding satisfying \eqref{GSUH2}. Then the germ
	$(X,(0,0))$ is $C^{\ell-1}$-conjugate to the germ $(Y,(0,0))$ of an
	unfolding $Y$ of $X_0$, where $Y$ admits the local expression
	$$
	Y_\lambda
	=
	y_2\frac{\partial}{\partial y_1}
	+
	y_3\frac{\partial}{\partial y_2}
	+
	g(\lambda,y)\frac{\partial}{\partial y_3}.
	$$
	Moreover, $Y$ satisfies \eqref{GSUH2} and is of class $C^{\ell-2}$.
\end{proposition}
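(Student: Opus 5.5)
The plan is to bring $X_\lambda$ to companion (Liénard-type) form by a parameter-dependent change of coordinates built from the flow structure itself. Set
$$
y_1=x_1,\qquad y_2=X_\lambda(x_1),\qquad y_3=X_\lambda(X_\lambda(x_1)),
$$
that is, $\phi_\lambda(x)=\bigl(x_1,\;X_\lambda\cdot x_1,\;X_\lambda^2\cdot x_1\bigr)$, where $X_\lambda\cdot u$ denotes the Lie derivative of $u$ along $X_\lambda$. Explicitly,
$$
y_2=x_2+f_1(\lambda,x),
\qquad
y_3=X_\lambda\cdot\bigl(x_2+f_1(\lambda,x)\bigr)=x_3+f_2(\lambda,x)+\sum_{i=1}^3\frac{\partial f_1}{\partial x_i}(\lambda,x)\,(X_\lambda)_i(x),
$$
where $(X_\lambda)_i$ are the components of $X_\lambda$ given in \eqref{GSUH2}.

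In these coordinates we have $\dot y_1=y_2$ and $\dot y_2=y_3$ tautologically, and $\dot y_3=X_\lambda^3\cdot x_1$. Composing with $\phi_\lambda^{-1}$ gives $g(\lambda,y):=(X_\lambda^3\cdot x_1)\circ\phi_\lambda^{-1}(y)$, so $(\phi_\lambda)_*X_\lambda=Y_\lambda$ has exactly the stated form. Taking $h=\mathrm{Id}$ makes this a conjugacy of unfoldings, and $\Phi(\lambda,x)=(\lambda,\phi_\lambda(x))$ is the required map.

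The steps are as follows.
\begin{enumerate}
\item Check that $\Phi$ is a local $C^{\ell-1}$ diffeomorphism at $(0,0)$ with $\phi_0=\mathrm{Id}$. Since $f_i(0,\cdot)=0$, all derivatives of $f_i$ in $x$ vanish at $\lambda=0$, so $\phi_0(x)=(x_1,x_2,x_3)=x$ and $D\Phi(0,0)=\mathrm{Id}$. The inverse function theorem then gives a local $C^{\ell-1}$ inverse on neighbourhoods $U,V$ of $(0,0)$, and $\Phi(0,0)=(0,0)$.
\item Compute the push-forward to verify the three components of $Y_\lambda$ as above.
\item Check that $Y$ satisfies \eqref{GSUH2}. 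It does, with $g(0,\cdot)=f_0$ because $\phi_0=\mathrm{Id}$, and with the extra terms identically zero.
\end{enumerate}

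The regularity bookkeeping is where care is needed, and I expect it to be the main point to justify. The map $\phi_\lambda$ involves first derivatives of $f_1$ through $y_3$. Since $X$ is only $C^\ell$, $\phi$ is $C^{\ell-1}$, and this is the source of the $C^{\ell-1}$ conjugacy. The vector field $g$ involves $X_\lambda^3\cdot x_1$, which contains second derivatives of $f_1$ and first derivatives of $f_2$, composed with $\phi^{-1}\in C^{\ell-1}$. This makes $Y$ of class $C^{\ell-2}$.

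Alternatively, one can see this from $Y=\Phi_*X$, whose vector part is $D\phi\cdot X\circ\Phi^{-1}$ with $D\phi\in C^{\ell-2}$. This gives the claimed smoothness directly and avoids counting derivatives in $g$ by hand. I would use this second formulation for the regularity statement and the explicit Lie-derivative formulas only for the normal form.
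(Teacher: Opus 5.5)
Your proof is correct and is the paper's argument: the paper's coordinates $y_2=x_2+f_1$, $y_3=x_3+r$, with $r$ determined by $X(x_2+f_1)=x_3+r$, are exactly your $y_2=X_\lambda\cdot x_1$ and $y_3=X_\lambda^2\cdot x_1$, with $g\circ\Phi=X_\lambda^3\cdot x_1$ and the same $C^{\ell-1}$/$C^{\ell-2}$ regularity count. One cosmetic point, which the paper shares: $D\Phi(0,0)$ is not literally the identity but block lower-triangular with identity diagonal blocks, since the block $\partial_\lambda\phi(0,0)$ need not vanish; this is still invertible, so the inverse function theorem applies.
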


\begin{proof}
	We write
	$$
	X_\lambda
	=
	(x_2+f_1(\lambda,x))\frac{\partial}{\partial x_1}
	+
	(x_3+f_2(\lambda,x))\frac{\partial}{\partial x_2}
	+
	(f(\lambda,x)+f_3(\lambda,x))\frac{\partial}{\partial x_3}.
	$$
	
	We look for a local change of coordinates of the form
	$$
	\Phi:\mathbb{R}^k\times \mathbb{R}^3
	\longrightarrow
	\mathbb{R}^k\times \mathbb{R}^3,
	\qquad
	\Phi(\lambda,x)=(\lambda,y),
	$$
	where
	$$
	y_1=x_1,\qquad
	y_2=x_2+f_1(\lambda,x),\qquad
	y_3=x_3+r(\lambda,x).
	$$
	
	Since $f_1(0,\cdot)=0$ and we choose $r$ such that $r(0,\cdot)=0$, the
	differential of $\Phi$ at $(0,0)$ is the identity. Therefore, by the inverse
	function theorem, $\Phi$ is a local diffeomorphism at the origin.
	
	We now choose $r$ so that the second component of $\Phi_*X$ is exactly
	$y_3$. Since
	$$
	X(y_2)
	=
	X(x_2+f_1),
	$$
	we impose
	$$
	X(x_2+f_1)=x_3+r.
	$$
	Hence
	$$
	r
	=
	f_2
	+
	\frac{\partial f_1}{\partial x_1}(x_2+f_1)
	+
	\frac{\partial f_1}{\partial x_2}(x_3+f_2)
	+
	\frac{\partial f_1}{\partial x_3}(f+f_3).
	$$
	Because $f_1,f_2,f,f_3$ are of class $C^\ell$, the function $r$ is of class
	$C^{\ell-1}$. Moreover, using $f_1(0,\cdot)=f_2(0,\cdot)=f_3(0,\cdot)=0$,
	we have $r(0,\cdot)=0.$
	
	Let $Y=\Phi_*X.$ 	Then, by construction,	$Y(y_1)=y_2$, 	$Y(y_2)=y_3.$
	The third component is given by $Y(y_3)=g(\lambda,y),$ 	where $g$ is defined locally by $g\circ \Phi	=X(x_3+r).$
	Equivalently,
	$$
	g\circ \Phi
	=
	f+f_3
	+
	\frac{\partial r}{\partial x_1}(x_2+f_1)
	+
	\frac{\partial r}{\partial x_2}(x_3+f_2)
	+
	\frac{\partial r}{\partial x_3}(f+f_3).
	$$
	
	Thus, in the coordinates $(\lambda,y)$, the unfolding $Y$ has the form
	$$
	Y_\lambda
	=
	y_2\frac{\partial}{\partial y_1}
	+
	y_3\frac{\partial}{\partial y_2}
	+
	g(\lambda,y)\frac{\partial}{\partial y_3}.
	$$
	
	Since $\Phi(0,x)=(0,x)$, it follows that $Y_0=X_0.$
	
	Therefore $Y$ is an unfolding of $X_0$. Finally, because $\Phi$ is of class
	$C^{\ell-1}$ and the push-forward involves one derivative of $\Phi$, the
	vector field $Y$ is of class $C^{\ell-2}$.
\end{proof}

\begin{remark}
	Using the preceding reduction and the implicit function theorem, under
	assumptions \eqref{GSUH1}, \eqref{GSUH2}, and \eqref{GSUH3}, one may work,
	up to local conjugacy and a local change of parameters, in the normalized
	case $f_1=f_2=f_{31}=0.$
\end{remark}
\begin{proposition}
	Let $X\in \mathfrak{X}^{\ell}(\mathbb{R}^3\mid \mathbb{R}^3)$
	satisfy \eqref{GSUH1}--\eqref{GSUH3}. Assume moreover that
	$f_1=f_2=f_{31}=0.$
	If the $2$-jet extension of $X$ is transverse to the corresponding
	codimension-three stratum of $J^2X_0$, then the germ of $X$ at the origin of
	$\mathbb{R}^3\times \mathbb{R}^3$ is $C^\ell$-induced by the germ at the
	origin of an unfolding $Y$ of $X_0$ having the local expression
	$$
	Y_\mu
	=
	x_2\frac{\partial}{\partial x_1}
	+
	x_3\frac{\partial}{\partial x_2}
	+
	\left(
	\mu_1+\mu_2x_2+\mu_3x_3+x_1^2+G(\mu,x)
	\right)
	\frac{\partial}{\partial x_3},
	$$
	where
	$$
	J^1G(0,0)=0,
	\qquad
	\frac{\partial^2G}{\partial x_1^2}(0,0)=0.
	$$
\end{proposition}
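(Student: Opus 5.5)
The plan is to read off the new parameters $\mu$ directly from the normalized expression \eqref{GSUH3}. Since $f_1=f_2=0$, $X_\lambda$ already has the form $x_2\partial_{x_1}+x_3\partial_{x_2}+F(\lambda,x)\partial_{x_3}$ with $F=f+f_3$. By \eqref{GSUH3} with $f_{31}=0$,
$$
F(\lambda,x)=f_{30}(\lambda)+x_1^2f_{32}(\lambda,x_1)+x_2h_1(\lambda,x)+x_3h_2(\lambda,x).
$$
The natural candidate for the inducing map is
$$
h:\lambda\longmapsto \mu=(\mu_1,\mu_2,\mu_3)=\bigl(f_{30}(\lambda),\,h_1(\lambda,0),\,h_2(\lambda,0)\bigr),
$$
which is $C^\ell$ and satisfies $h(0)=0$ because $f_{30}(0)=h_1(0,0)=h_2(0,0)=0$.

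\textbf{Step 1: $h$ is a local diffeomorphism.} The codimension-three stratum of $J^2X_0$ is cut out, modulo the normal form, by the vanishing of the constant term and of the coefficients of $x_2$ and $x_3$ in the third component; the coefficient of $x_1$ is already zero by the normalization $f_{31}=0$. The $2$-jet extension $\lambda\mapsto J^2X_\lambda$ is transverse to this stratum exactly when the derivative of $\lambda\mapsto(\partial_\lambda$ of these three coefficients$)$ at $0$ has rank three, that is, when $Dh(0)$ is invertible. This is the main point to check carefully. One must verify that the transversality hypothesis is stated in coordinates in which these three coefficients are the normal directions to the stratum, and that the earlier reduction of the Remark (which made $f_{31}=0$ through the implicit function theorem) has not used up one of the directions. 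I would handle this by computing the tangent space of the stratum $\{j_0F=0,\ \partial_{x_2}F=\partial_{x_3}F=0\}$ at $J^2X_0$ and checking that its complement is spanned by $\partial_{\mu_1},\partial_{\mu_2},\partial_{\mu_3}$. By the inverse function theorem, $h$ is then a $C^\ell$ local diffeomorphism of $(\mathbb{R}^3,0)$.

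\textbf{Step 2: define $G$ and check its jet conditions.} Set $k=h^{-1}$ near $0$ and define
$$
G(\mu,x)=F(k(\mu),x)-\mu_1-\mu_2x_2-\mu_3x_3-x_1^2.
$$
Substituting the expression for $F$ gives
$$
G=x_1^2\bigl(f_{32}(k(\mu),x_1)-1\bigr)+x_2\bigl(h_1(k(\mu),x)-h_1(k(\mu),0)\bigr)+x_3\bigl(h_2(k(\mu),x)-h_2(k(\mu),0)\bigr).
$$
Each term vanishes to order $\geq 2$ in $x$, so $G(\mu,0)=0$ and $\partial_xG(\mu,0)=0$. The only remaining first-order derivatives in $\mu$ at the origin are those of $G(\mu,0)$, which vanish identically. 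Hence $J^1G(0,0)=0$. Moreover, $\partial^2_{x_1}G(0,0)=2(f_{32}(0,0)-1)=0$, since $f_{32}(0,0)=1$ and the $x_2$, $x_3$ terms do not contribute to $\partial^2_{x_1}$.

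\textbf{Step 3: conclude.} Define $Y_\mu$ by the displayed formula with this $G$. Then $Y_\mu=X_{k(\mu)}$, so $X_\lambda=Y_{h(\lambda)}$, and the germ of $X$ at the origin is $C^\ell$-induced by the germ of $Y$ via $h$. Finally, $Y_0=X_0$ because $h(0)=0$, so $Y$ is an unfolding of $X_0$.
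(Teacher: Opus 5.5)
Your proposal is correct and follows the paper's proof essentially line by line. It uses the same inducing map $\lambda\mapsto\bigl(f_{30}(\lambda),h_1(\lambda,0),h_2(\lambda,0)\bigr)$, a local diffeomorphism because the transversality hypothesis is read as invertibility of its Jacobian at $0$, and the same remainder $G$ built from $f_{32}-1$ and $h_i(\lambda,x)-h_i(\lambda,0)$. Your check of $J^1G(0,0)=0$ uses that $G(\mu,\cdot)$ vanishes to second order in $x$ for every $\mu$, so the $\mu$-derivatives vanish too. That is slightly more careful than the paper, which, like you, simply takes the nonvanishing of that Jacobian determinant as the content of the transversality assumption.
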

\begin{proof}
	Under the normalization
	$$
	f_1=f_2=f_{31}=0,
	$$
	hypothesis \eqref{GSUH3} gives
	$$
	f(\lambda,x)+f_3(\lambda,x)
	=
	f_{30}(\lambda)
	+
	x_1^2f_{32}(\lambda,x_1)
	+
	x_2h_1(\lambda,x)
	+
	x_3h_2(\lambda,x).
	$$
	
	The transversality assumption on the $2$-jet of $X$ implies that
	$$
	\det
	\left(
	\frac{\partial(f_{30},h_1(\cdot,0),h_2(\cdot,0))}
	{\partial(\lambda_1,\lambda_2,\lambda_3)}(0)
	\right)
	\neq 0.
	$$
	Therefore, the map
	$$
	\varphi:\mathbb{R}^3\longrightarrow \mathbb{R}^3,
	\qquad
	\lambda\longmapsto \mu
	=
	\bigl(
	f_{30}(\lambda),
	h_1(\lambda,0),
	h_2(\lambda,0)
	\bigr)
	$$
	is a local diffeomorphism at the origin.
	
	We define an unfolding $Y$ with parameter $\mu$ by
	$$
	Y_\mu
	=
	x_2\frac{\partial}{\partial x_1}
	+
	x_3\frac{\partial}{\partial x_2}
	+
	\left(
	\mu_1+\mu_2x_2+\mu_3x_3+x_1^2+G(\mu,x)
	\right)
	\frac{\partial}{\partial x_3},
	$$
	where
	$$
	G(\mu,x)
	=
	x_1^2
	\left(
	f_{32}(\varphi^{-1}(\mu),x_1)-1
	\right)
	+
	x_2\overline{h}_1(\varphi^{-1}(\mu),x)
	+
	x_3\overline{h}_2(\varphi^{-1}(\mu),x),
	$$
	with
	$$
	\overline{h}_i(\lambda,x)
	=
	h_i(\lambda,x)-h_i(\lambda,0),
	\qquad i=1,2.
	$$
	
	Then, for $\mu=\varphi(\lambda)$, we have $Y_{\varphi(\lambda)}=	X_\lambda.$
	Equivalently, $
	X(\lambda,x)=Y(\varphi(\lambda),x).$
	Hence the germ of $X$ is induced by the germ of $Y$ via the local
	diffeomorphism $\varphi$.
	
	It remains to verify the conditions on $G$. Since
	$$
	f_{32}(0,0)=1,
	\qquad
	\overline{h}_1(0,0)=\overline{h}_2(0,0)=0,
	$$
	we obtain $J^1G(0,0)=0.$ Moreover,
	$$
	\frac{\partial^2}{\partial x_1^2}
	\left[
	x_1^2
	\left(
	f_{32}(\varphi^{-1}(\mu),x_1)-1
	\right)
	\right]_{(0,0)}
	=
	2\bigl(f_{32}(0,0)-1\bigr)
	=
	0.
	$$
	Therefore,
	$$
	\frac{\partial^2G}{\partial x_1^2}(0,0)=0.
	$$
	
	Finally, for $\mu=0$ we have $\lambda=\varphi^{-1}(0)=0$, and so $Y_0=X_0.$
	Thus $Y$ is an unfolding of $X_0$.
\end{proof}

Thus, using the normal form of the $3$-jet of $X$ at the origin obtained
from the adjoint representation
$$
H: J^s(\mathbb{R}^3\times \mathbb{R}^3)
\longrightarrow
J^s(\mathbb{R}^3\times \mathbb{R}^3),
\qquad
U\longmapsto J^s([U,Z])(0),
$$
where
$$
Z
=
x_2\frac{\partial}{\partial x_1}
+
x_3\frac{\partial}{\partial x_2}
+
x_1^2\frac{\partial}{\partial x_3},
$$
we obtain the following normal form.

\begin{proposition}
	Let $X\in \mathfrak{X}^{\ell}(\mathbb{R}^3\mid \mathbb{R}^3)$
	be a generic unfolding of $X_0$ satisfying hypotheses
	\eqref{GSUH1}--\eqref{GSUH3}. Then the germ
	$(X,(0,0))$ at the origin of
	$\mathbb{R}^3\times \mathbb{R}^3$ is locally conjugate, up to a local
	change of parameters, to the germ at the origin of an unfolding $Y$ of
	$X_0$ with local expression
	\begin{equation}
		\label{GSUH4}
		\tag{H4}
		Y
		=
		x_2\frac{\partial}{\partial x_1}
		+
		x_3\frac{\partial}{\partial x_2}
		+
		\left(
		\mu_1+\mu_2x_2+\mu_3x_3+x_1^2+g(\mu,x)
		\right)
		\frac{\partial}{\partial x_3},
	\end{equation}
	where the $3$-jet of $g$ at the origin is given by
	\begin{align*}
		J^3g(0,0)
		=
		x_1^2
		\left(
		a_{10}(\mu)+\sum_{i=1}^3 x_i a_{1i}
		\right)
		+
		x_1x_2
		\left(
		b+a_{20}(\mu)
		\right) 
		+
		x_1x_3
		\left(
		c+a_{30}(\mu)+\sum_{i=1}^3 x_i a_{3i}
		\right).
	\end{align*}
	Here
	$$
	b
	=
	2\frac{\partial^2 f_0}{\partial x_1\partial x_2}(0),
	\qquad
	c
	=
	2\frac{\partial^2 f_0}{\partial x_1\partial x_3}(0),
	$$
	with
	$|b|=1$, $c\neq 0.$
	Moreover, for $r=1,2,3$, the functions $a_{r0}$ are linear in the
	parameters:
	$$
	a_{r0}(\mu)
	=
	\sum_{i=1}^3 \mu_i a_{r0i},
	\qquad
	a_{r0i}\in \mathbb{R}.
	$$
\end{proposition}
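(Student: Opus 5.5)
We reduce to the inducing unfolding of the preceding proposition and then normalize its $3$-jet by parameter-dependent polynomial coordinate changes, using the homological operator $H$.

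First, by the first proposition of this section and the remark after it, after a local conjugacy and a change of parameters we may assume $f_1=f_2=f_{31}=0$. The preceding proposition then shows that the germ of $X$ is induced, through the local diffeomorphism $\varphi$, by the unfolding
$$Y_\mu=x_2\partial_{x_1}+x_3\partial_{x_2}+\bigl(\mu_1+\mu_2x_2+\mu_3x_3+x_1^2+G(\mu,x)\bigr)\partial_{x_3},$$
with $J^1G(0,0)=0$ and $\partial^2G/\partial x_1^2(0,0)=0$. Since $\varphi$ is a local diffeomorphism, being induced is the same as being conjugate up to a change of parameters. It therefore remains to put the $3$-jet of $G$, in the variables $(\mu,x)$, into the stated form, with $\mu$ counted with weight one.

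Second, we normalize the jet degree by degree. The perturbation to be removed at each order is the non-normal part of $J^3G$. This consists of the monomials $x_2^2$, $x_2x_3$, $x_3^2$, the cubic terms not of the form $x_1^2x_i$ or $x_1x_3x_i$, and the $\mu$-dependent terms not of the form $\mu x_1^2$, $\mu x_1x_2$, $\mu x_1x_3$.

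For each homogeneous piece $P$ we seek a polynomial vector field $U$, possibly depending linearly on $\mu$, with $J^s[U,Z]=P$ modulo the span of the normal-form monomials. We then apply the time-one map of $U$ as a coordinate change. Coordinate changes that mix the first two components would spoil the chain structure $x_2\partial_{x_1}+x_3\partial_{x_2}$. These are reabsorbed exactly as in the proof of the first proposition, by redefining $y_2$ and $y_3$ through $Y(y_1)$ and $Y(y_2)$.

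Concretely, for $U=u\,\partial_{x_1}$ with $u$ homogeneous, the third component of $[U,Z]$ contributes $2x_1u$. Its first two components are compensated through the triangular structure. The remaining $x_3$-component of the compensating field is $X^2$ applied to $u$, which yields terms $x_2\partial u/\partial x_1$ and so on. Because $x_1^2$ is the only quadratic term of $Z$, the map $u\mapsto 2x_1u+(\text{derivative terms in }x_2,x_3)$ lets us kill every monomial containing $x_2^2$, $x_2x_3$ or $x_3^2$ and every cubic term except those listed. We argue this by a weighted-degree count, giving $x_1,x_2,x_3$ the weights $2,3,4$ so that $Z$ is quasi-homogeneous. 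In each weighted degree we check that the image of $H$ together with the listed monomials spans the whole space. This is a finite linear algebra computation.

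Third, we treat the constants. The coefficient $b$ of $x_1x_2$ equals $2\,\partial^2 f_0/\partial x_1\partial x_2(0)$, and $c$ is the analogous coefficient of $x_1x_3$. These are unaffected by the changes at $\mu=0$ at this order. The conditions $b\neq0$ and $c\neq0$ are the genericity hypotheses defining the codimension-three stratum. We make $|b|=1$ by a rescaling $x\mapsto(\alpha^3x_1,\alpha^2 x_2,\alpha x_3)$ combined with a time rescaling and a rescaling of $\mu$. A suitable choice of $\alpha$ preserves the coefficient $1$ of $x_1^2$ and normalizes $|b|$. The linear dependence of $a_{r0}$ on $\mu$ follows because we truncate at total order $3$ in $(\mu,x)$, so a term $\mu x_1x_j$ can carry only a linear coefficient in $\mu$.

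The main obstacle is the surjectivity step in the second paragraph: showing that the complement of the image of $H$ in the relevant degrees is exactly spanned by the listed monomials. I would first compute $H$ on the monomial basis in each weighted degree, and then check the ranks explicitly.
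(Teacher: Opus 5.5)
Your route is the paper's: reduce through the preceding proposition, normalize the $3$-jet with the homological operator of $Z=x_2\partial_{x_1}+x_3\partial_{x_2}+x_1^2\partial_{x_3}$, rescale to fix $b$, and read off the linearity of $a_{r0}$ from the truncation. The paper only asserts the result of the complement computation, and you rightly isolate it as the crux. But the grading you propose for it is wrong.

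With weights $(2,3,4)$ the field $Z$ is not quasi-homogeneous. The terms $x_2\partial_{x_1}$ and $x_3\partial_{x_2}$ have weighted degree $1$, whereas $x_1^2\partial_{x_3}$ has weighted degree $0$. Hence $H$ mixes weighted degrees, and so does your chain-form operator $L(u)=Z^3(u)-2x_1u$ (this is what ``$2x_1u$ plus derivative terms'' amounts to). The ``finite check in each weighted degree'' is therefore not well posed. The conditions $w_2-w_1=w_3-w_2=2w_1-w_3$ force $(w_1,w_2,w_3)=(3,4,5)$, the weights of Lemma~\ref{GSUlem1}, with $\mu$ weighted $(6,2,1)$. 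With these weights $L$ raises weight by exactly $3$, and the check does become finite. For instance $L(x_3)=2x_2^2$, $L(x_1^2)=6x_2x_3$ and $L(x_1x_2)=3x_3^2+4x_1^2x_2$. Also $L$ maps weight $8$ onto weight $11$, so $x_1x_2^2$ (and even the listed $x_1^2x_3$) can be removed. Even then, the ordinary $3$-jet is not a sum of weighted pieces, so your criterion must read ``image of $H$ $+$ listed monomials $+$ monomials of order $\ge 4$ span''. In weights $14$ and $15$ of the third component, the cubic terms $x_2x_3^2$ and $x_3^3$ lie outside the image. They can only be removed from the $3$-jet by trading them for higher-order terms such as $x_1^3x_3$ and $x_1^5$, so the criterion as you state it fails there.

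The rescaling step also fails. Under $x\mapsto(\alpha^3x_1,\alpha^2x_2,\alpha x_3)$, preserving $\dot x_1=x_2$ and $\dot x_2=x_3$ forces the time factor $\alpha$, and then the coefficient of $x_1^2$ becomes $\alpha^6$. Keeping it equal to $1$ forces $\alpha=\pm1$, which changes $b$ at most by a sign. The scaling that works is again that of Lemma~\ref{GSUlem1}: take $x=(s^3y_1,s^4y_2,s^5y_3)$ and replace $X$ by $X/s$. This sends $b\mapsto sb$ and $c\mapsto s^2c$, and you then take $s=1/|b|$.

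This also shows you cannot assert both that $b$ is ``unaffected'' and equal to a second derivative of $f_0$, and that $|b|=1$. Genericity only gives the open conditions $b\neq0$ and $c\neq0$; these do not define the codimension-three stratum. Let $b_0,c_0$ be the coefficients of $x_1x_2$ and $x_1x_3$ in $f_0$. After normalization $|b|=1$ and $c=c_0/b_0^2$. Note that $b_0=\partial^2f_0/\partial x_1\partial x_2(0)$, with no factor $2$. This inconsistency is inherited from the statement, but your proof must say which normalization it actually establishes.
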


\begin{proof}
	By the previous reduction, after a local conjugacy and a local change of
	parameters, the unfolding $X$ may be written in the form
	$$
	X
	=
	x_2\frac{\partial}{\partial x_1}
	+
	x_3\frac{\partial}{\partial x_2}
	+
	\left(
	\mu_1+\mu_2x_2+\mu_3x_3+x_1^2+g(\mu,x)
	\right)
	\frac{\partial}{\partial x_3},
	$$
	with
	$$
	J^1g(0,0)=0,
	\qquad
	\frac{\partial^2g}{\partial x_1^2}(0,0)=0.
	$$
	
	The remaining freedom in the choice of local coordinates is described, at
	the jet level, by the adjoint action $H(U)=J^3([U,Z])(0),$
	where 
	$$
	Z=x_2\frac{\partial}{\partial x_1}
	+x_3\frac{\partial}{\partial x_2}
	+x_1^2\frac{\partial}{\partial x_3}.
	$$
	This adjoint representation determines the tangent space to the orbit of
	the normal form under local changes of coordinates.
	
	Computing the quotient of the space of $3$-jets by the image of $H$, one
	obtains a complementary space generated by the monomials
	$$
	x_1^2,\quad
	x_1^2x_i,\quad
	x_1x_2,\quad
	x_1x_3,\quad
	x_1x_3x_i,
	\qquad i=1,2,3.
	$$
	Consequently, the $3$-jet of the perturbation $g$ can be reduced to the
	form
	\begin{align*}
		J^3g(0,0)
		=
		x_1^2
		\left(
		a_{10}(\mu)+\sum_{i=1}^3 x_i a_{1i}
		\right)
		+
		x_1x_2
		\left(
		b+a_{20}(\mu)
		\right)
		+
		x_1x_3
		\left(
		c+a_{30}(\mu)+\sum_{i=1}^3 x_i a_{3i}
		\right).
	\end{align*}
	
	The constants $b$ and $c$ are determined by the second-order part of the
	unperturbed vector field $X_0$. More precisely,
	$$
	b
	=
	2\frac{\partial^2 f_0}{\partial x_1\partial x_2}(0),
	\qquad
	c
	=
	2\frac{\partial^2 f_0}{\partial x_1\partial x_3}(0).
	$$
	By the genericity assumptions, one has $|b|=1$, $c\neq 0.$
	
	Finally, since the unfolding is three-parameter and generic, the
	parameter-dependent coefficients appearing in the $3$-jet may be written,
	up to higher-order terms in the parameters, as linear functions of
	$\mu=(\mu_1,\mu_2,\mu_3)$:
	$$
	a_{r0}(\mu)=\sum_{i=1}^3 \mu_i a_{r0i},
	\qquad r=1,2,3.
	$$
	This gives the stated normal form.
\end{proof}
\subsection{Local Bifurcations of a Generic Three-Parameter Unfolding}

In this subsection, we study unfoldings $X\in \mathfrak{X}^{m}(\mathbb{R}^{3}\mid \mathbb{R}^{3})$
having the local expression
\begin{equation}
	\label{GSUH5}
	\tag{H5}
	\begin{array}{l}
		X(\lambda,x)=X_{\lambda}(x)
		=
		x_2\frac{\partial}{\partial x_1}
		+
		x_3\frac{\partial}{\partial x_2}
		+
		f_{\lambda}(x)\frac{\partial}{\partial x_3},
		\\[0.2cm]
		\begin{cases}
			f_{\lambda}(x)
			=
			\lambda_1+\lambda_2x_2+\lambda_3x_3+x_1^2
			+
			\displaystyle\sum_{i,j=1}^{3}x_ix_jA_{ij}(\lambda,x),
			\\[0.2cm]
			A_{ij}=A_{ji},
			\qquad
			A_{ij}\in C^m(\mathbb{R}^3\times \mathbb{R}^3),
			\\[0.2cm]
			A_{11}(0,0)=A_{22}(0,0)=A_{23}(0,0)=A_{33}(0,0)=0.
		\end{cases}
	\end{array}
\end{equation}

We further assume that
\begin{equation}
	\label{GSUH6}
	\tag{H6}
	\begin{array}{l}
		J^3f_0(0)
		=
		bx_1x_2
		+
		x_1x_3
		\left(
		c+\displaystyle\sum_{i=1}^{3}x_i a_{13}^{i}
		\right)
		+
		x_1^2
		\left(
		1+\displaystyle\sum_{i=1}^{3}x_i a_{11}^{i}
		\right),
		\\[0.2cm]
		b=2A_{12}(0,0),
		\qquad
		c=2A_{13}(0,0),
		\qquad
		|b|=1,
		\qquad
		c\neq 0.
	\end{array}
\end{equation}

Our aim is to describe the bifurcation set $\Psi$ of the germ
$(X,(0,0))$. Here $\Psi$ denotes the smallest closed subset of
$\mathbb{R}^3$ such that the topological type of $X_\lambda$ is locally
constant on each connected component of $\mathbb{R}^3\setminus \Psi$,
with respect to topological equivalence.
\section{Main Results}
\label{Sect4_GPU}
We first identify a principal part of the unfolding $X$. This principal
part gives the leading-order geometry of the local bifurcation set.

\begin{lemma}
	\label{GSUlem1}
	Let $\Phi:\mathbb{R}^3\times \mathbb{R}\times \mathbb{R}^3
	\longrightarrow
	\mathbb{R}^3\times \mathbb{R}^3$, 	be the map $(\mu,s,y)\longmapsto (\lambda,x),
	$
	defined by
	$\lambda_1=s^6\mu_1$,	$\lambda_2=s^2\mu_2$,$\lambda_3=s\mu_3$, 
	$x_1=s^3y_1$, $x_2=s^4y_2$, and $x_3=s^5y_3.$
	Let $X\in \mathfrak{X}^{m}(\mathbb{R}^{3}\mid \mathbb{R}^{3})$
	be an unfolding of $X_0$ satisfying \eqref{GSUH4}--\eqref{GSUH6}.
	Then the following assertions hold.
	
	\begin{enumerate}
		\item For every $s\in \mathbb{R}^{\ast}$, the map $\Phi_s:=\Phi(\cdot,s,\cdot)
		$
		is an analytic diffeomorphism preserving the origin of
		$\mathbb{R}^3\times \mathbb{R}^3$. Moreover, for every
		$(\mu,s)\in \mathbb{R}^3\times \mathbb{R}^{\ast}$, the map
		$\Phi_{\mu,s}:=\Phi(\mu,s,\cdot)$
		preserves the orientation of $\mathbb{R}^3$.
		
		\item There exists a unique $C^{m-1}$-unfolding $Y$ of $X_0$, depending on
		the parameters $(\mu,s)\in \mathbb{R}^3\times \mathbb{R}$, such that, for
		every $s\neq 0$,
		$$
		(\Phi_s)_*Y_s=\frac{1}{s}X.
		$$
		Equivalently,
		$$
		(\Phi_s)_*(sY_s)=X.
		$$
		More precisely,
		$$
		Y_{\mu,s}
		=
		y_2\frac{\partial}{\partial y_1}
		+
		y_3\frac{\partial}{\partial y_2}
		+
		g(\mu,s,y)\frac{\partial}{\partial y_3},
		$$
		where $g$ is of class $C^{m-1}$ and satisfies
		\begin{equation}
			\label{GSUg}
			g(\mu,s,y)
			=
			\frac{1}{s^6}f_{\lambda}(x)\circ \Phi.
		\end{equation}
		Thus
		\begin{align*}
			g(\mu,s,y)
			&=
			\mu_1+\mu_2y_2+\mu_3y_3+y_1^2
			+
			y_1^2 A_{11}\circ \Phi
			+
			2s y_1y_2 A_{12}\circ \Phi
			\\
			&\quad
			+
			2s^2 y_1y_3 A_{13}\circ \Phi
			+
			s^2 y_2^2 A_{22}\circ \Phi
			+
			2s^3 y_2y_3 A_{23}\circ \Phi
			+
			s^4 y_3^2 A_{33}\circ \Phi.
		\end{align*}
		
		Moreover,
		\begin{enumerate}
			\item the unfolding $Y$ may be written as
			$$
			Y
			=
			Y_0
			+
			s
			\left(
			\sum_{i,j=1}^{3}y_iy_jB_{ij}
			\right)
			\frac{\partial}{\partial y_3},
			$$
			where the functions $B_{ij}$ are of class $C^{m-1}$ and are given by
			\begin{equation}\label{Aij_Bij_GPU}
				\begin{array}{ccc}
A_{11}\circ \Phi = sB_{11},&B_{12} = A_{12}\circ \Phi,& B_{13} = sA_{13}\circ \Phi, \\
				B_{22} = sA_{22}\circ \Phi,&
				B_{23} = s^2A_{23}\circ \Phi,&
				B_{33} = s^3A_{33}\circ \Phi.
			\end{array}
			\end{equation}
			
			\item For every $(\mu,s)\in \mathbb{R}^3\times \mathbb{R}^{\ast}$,
			one has $\operatorname{sgn}(\lambda_1)=\operatorname{sgn}(\mu_1),$
			because $\lambda_1=s^6\mu_1.$
		\end{enumerate}
	\end{enumerate}
\end{lemma}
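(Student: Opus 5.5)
The plan is a direct computation of the push-forward under the weighted (quasi-homogeneous) blow-up $\Phi$, followed by an argument that the resulting formula extends to $s=0$ with the stated regularity.

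\textbf{Part 1.} Fix $s\neq 0$. Each component of $\Phi_s$ is a monomial map $\lambda_1=s^6\mu_1$, $x_1=s^3y_1$, and so on, so it is a linear isomorphism with inverse $\mu_1=s^{-6}\lambda_1$, $y_1=s^{-3}x_1$, etc. Hence it is analytic and fixes the origin. For fixed $(\mu,s)$, the map $\Phi_{\mu,s}$ is the diagonal linear map $\operatorname{diag}(s^3,s^4,s^5)$, whose determinant is $s^{12}>0$, so it preserves orientation. The statement $\operatorname{sgn}(\lambda_1)=\operatorname{sgn}(\mu_1)$ in (b) is immediate from $s^6>0$.

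\textbf{Part 2, the case $s\neq0$.} Since $\Phi_s$ is a diffeomorphism, the field $sY_s:=(\Phi_s^{-1})_*X$ is uniquely determined, and so is $Y_s$ for $s\neq 0$. I would compute it component by component along trajectories:
\begin{itemize}
\item $\dot x_1=x_2$ gives $s^3\dot y_1=s^4y_2$, so $\dot y_1=s\,y_2$.
\item Similarly $\dot y_2=s\,y_3$.
\item $s^5\dot y_3=f_\lambda(x)$ gives $\dot y_3=s^{-5}f_\lambda\circ\Phi$.
\end{itemize}
Dividing by $s$ yields $Y_{\mu,s}=y_2\partial_{y_1}+y_3\partial_{y_2}+g\,\partial_{y_3}$ with $g=s^{-6}f_\lambda\circ\Phi$, which is \eqref{GSUg}.

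Next I substitute the expression \eqref{GSUH5}. The terms $\lambda_1$, $\lambda_2x_2$, $\lambda_3x_3$ and $x_1^2$ are each of weighted degree $6$, so they give $s^6(\mu_1+\mu_2y_2+\mu_3y_3+y_1^2)$. The quadratic terms $x_ix_jA_{ij}$ carry the powers $s^6,s^7,s^8,s^8,s^9,s^{10}$ for $(ij)=(11),(12),(13),(22),(23),(33)$. The symmetry $A_{ij}=A_{ji}$ produces the factor $2$ on the off-diagonal terms. This gives the displayed formula for $g$.

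\textbf{Extension to $s=0$.} Every term of $g$ except $y_1^2A_{11}\circ\Phi$ is either independent of $s$ or carries a factor $s$. It therefore remains to show that $A_{11}\circ\Phi$ is divisible by $s$. At $s=0$, $\Phi(\mu,0,y)=(0,0)$, so $A_{11}\circ\Phi(\mu,0,y)=A_{11}(0,0)=0$ by \eqref{GSUH5}. By the Hadamard lemma,
$$A_{11}\circ\Phi(\mu,s,y)=s\int_0^1\partial_s(A_{11}\circ\Phi)(\mu,ts,y)\,dt=:sB_{11},$$
with $B_{11}$ of class $C^{m-1}$. The remaining $B_{ij}$ in \eqref{Aij_Bij_GPU} are defined by the explicit power factors and are $C^{m}$.

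Consequently $g$ extends as a $C^{m-1}$ function to all $s\in\mathbb{R}$, and
$$Y=Y_0+s\Big(\sum y_iy_jB_{ij}\Big)\partial_{y_3},$$
where $Y_0$ is the $s=0$ field $y_2\partial_{y_1}+y_3\partial_{y_2}+(\mu_1+\mu_2y_2+\mu_3y_3+y_1^2)\partial_{y_3}$. Uniqueness of the extension follows by continuity from the case $s\neq0$. Since $Y$ is defined by the same formula at $\mu=0$, it is an unfolding in the sense of the paper.

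\textbf{Main obstacle.} The only delicate step is the regularity at $s=0$. The Hadamard division costs one derivative, which is exactly why $Y$ is only $C^{m-1}$. One must also check that the $s^{-6}$ factor is fully absorbed, that is, that no term of weighted degree below $6$ occurs in $f_\lambda$. This uses that $J^1$ of the quadratic part vanishes and that $x_1^2$ is exactly of degree $6$.
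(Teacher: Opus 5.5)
Your proposal is correct and follows the paper's proof essentially step by step. Both arguments use the invertibility of the diagonal map $\Phi_s$ and the determinant $s^{12}>0$ for part 1, then $g=s^{-6}f_\lambda\circ\Phi$, the weighted substitution, and the divisibility of $A_{11}\circ\Phi$ by $s$ from $A_{11}(0,0)=0$. Your Hadamard formula for $B_{11}$ and your uniqueness argument make explicit what the paper leaves implicit.

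Two small corrections to your commentary. First, the substituted formula for $g$ involves no division, so $g$ is in fact $C^m$ for all $s$, and the lost derivative enters only through $B_{11}$. Second, $Y_{0,0}=y_2\frac{\partial}{\partial y_1}+y_3\frac{\partial}{\partial y_2}+y_1^2\frac{\partial}{\partial y_3}\neq X_0$ because $b\neq 0$. The correct reason $Y$ unfolds $X_0$ is that $\Phi_1$ is the identity, whence $Y_{\mu,1}=X_\mu$ and $Y_{0,1}=X_0$.
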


\begin{proof}
	\begin{enumerate}
		\item For fixed $s\neq 0$, the map $\Phi_s$ is linear and invertible.
		Its Jacobian determinant on the full space
		$\mathbb{R}^3\times \mathbb{R}^3$ is $s^6\cdot s^2\cdot s\cdot s^3\cdot s^4\cdot s^5=s^{21}\neq 0.$
		Hence $\Phi_s$ is an analytic diffeomorphism preserving the origin.		
		For fixed $(\mu,s)$, the induced map on the phase variables is $y\longmapsto x=(s^3y_1,s^4y_2,s^5y_3),$
		whose Jacobian determinant is $s^3s^4s^5=s^{12}>0.$
		Therefore $\Phi_{\mu,s}$ preserves the orientation of $\mathbb{R}^3$.
		
		\item The vector field $X$ is defined by $\dot{x}_1=x_2$,
		$\dot{x}_2=x_3$, $\dot{x}_3=f_\lambda(x)$, $\dot{\lambda}_i=0.$
		We look for $Y$ in the form
		$$
		Y_{\mu,s}
		=
		y_2\frac{\partial}{\partial y_1}
		+
		y_3\frac{\partial}{\partial y_2}
		+
		g(\mu,s,y)\frac{\partial}{\partial y_3}.
		$$
		Because $x_1=s^3y_1$, $x_2=s^4y_2$, $x_3=s^5y_3$,
		we obtain
		$$
		(\Phi_s)_*Y_s
		=
		s^3y_2\frac{\partial}{\partial x_1}
		+
		s^4y_3\frac{\partial}{\partial x_2}
		+
		s^5g(\mu,s,y)\frac{\partial}{\partial x_3}.
		$$
		On the other hand,
		$$
		\frac{1}{s}X
		=
		s^3y_2\frac{\partial}{\partial x_1}
		+
		s^4y_3\frac{\partial}{\partial x_2}
		+
		\frac{1}{s}f_\lambda(x)\frac{\partial}{\partial x_3}.
		$$
		Thus the equality
		$$
		(\Phi_s)_*Y_s=\frac{1}{s}X
		$$
		is equivalent to
		$$
		s^5g(\mu,s,y)=\frac{1}{s}f_\lambda(x),
		$$
		that is,
		$$
		g(\mu,s,y)=\frac{1}{s^6}f_\lambda(x)\circ\Phi.
		$$
		
		Substituting the expressions of $\lambda_i$ and $x_i$ into
		$f_\lambda(x)$ gives
		\begin{align*}
			g(\mu,s,y)
			&=
			\mu_1+\mu_2y_2+\mu_3y_3+y_1^2
			+
			y_1^2A_{11}\circ\Phi
			+
			2sy_1y_2A_{12}\circ\Phi
			\\
			&\quad
			+
			2s^2y_1y_3A_{13}\circ\Phi
			+
			s^2y_2^2A_{22}\circ\Phi
			+
			2s^3y_2y_3A_{23}\circ\Phi
			+
			s^4y_3^2A_{33}\circ\Phi.
		\end{align*}
		
		Since $A_{11}(0,0)=0$, the composition $A_{11}\circ\Phi$ is divisible
		by $s$. Hence there exists a $C^{m-1}$ function $B_{11}$ such that
		$A_{11}\circ\Phi=sB_{11}.$
	
	By setting \eqref{Aij_Bij_GPU}, we obtain
		$$
		Y
		=
		Y_0
		+
		s
		\left(
		\sum_{i,j=1}^{3}y_iy_jB_{ij}
		\right)
		\frac{\partial}{\partial y_3}.
		$$
		
		Finally, since $\lambda_1=s^6\mu_1$ and $s^6>0$ for $s\neq 0$, we have
		$\operatorname{sgn}(\lambda_1)=\operatorname{sgn}(\mu_1).$
	\end{enumerate}
\end{proof}

\begin{definition}
	The vector field $$Y_0=y_2\frac{\partial}{\partial y_1}+y_3\frac{\partial}{\partial y_2}+
	\left(
	u_1+u_2y_2+u_3y_3+y_1^2
	\right)
	\frac{\partial}{\partial y_3}$$
	is called the principal part of $X$.
\end{definition}

To study the structural stability of local bifurcations of the germ of an
unfolding at a critical point, it is necessary to establish the persistence
of the topological type under sufficiently small perturbations. This is
usually achieved by restricting the vector field to a local center manifold
and by approximating the reduced dynamics up to an order that determines the
topological type.

By Lemma \ref{GSUlem1}, for $s\neq 0$, the unfolding $X$ is orbitally
equivalent to $Y_s$. Moreover, $Y_s$ is a perturbation of the principal part
$Y_0$. Therefore, the structural stability of a local bifurcation of $Y_0$
implies the structural stability of the corresponding local bifurcation of
$X$.

We now identify the local bifurcation values of the germ of the unfolding
$X$ at the origin of $\mathbb{R}^3\times \mathbb{R}^3$.

\begin{theorem}
	Let $X\in \mathfrak{X}^{m}(\mathbb{R}^{3}\mid \mathbb{R}^{3})$
	satisfy hypotheses \eqref{GSUH1}--\eqref{GSUH3}. Let
	$J(\lambda,x)$ denote the Jacobian matrix of $X_\lambda$ with respect to
	$x$. If $r_1\in \mathbb{R}$, $r_2=\alpha+i\beta$, $r_3=\alpha-i\beta$
	are the eigenvalues of $J(\lambda,x)$, then the germ
	$(X_\lambda,0)$ possesses the following local bifurcations.
	
	\begin{enumerate}
		\item Three singularities of codimension one.
		
		\begin{enumerate}
			\item The singularity $	x^1=(0,0,0)$
			appears on the surface
			$$
			S_1=
			\left\{
			\lambda\in \mathbb{R}^3:
			\lambda_1=0,\;
			\lambda_3\neq 0,\;
			4\lambda_2+\lambda_3^2\leq 0
			\right\}.
			$$
			The eigenvalues are $r_1=0$,	and
			$$
			r_{2,3}
			=
			\frac{1}{2}
			\left(
			\lambda_3
			\pm
			i\left|4\lambda_2+\lambda_3^2\right|^{1/2}
			\right).
			$$
			
			\item For $\lambda_1<0$, let $v_\varepsilon	=-\varepsilon\sqrt{-\lambda_1}$, $\varepsilon=\pm 1,$
			be the corresponding solution, up to higher-order terms, of
			$$
			\lambda_1+x_1^2
			\left(
			1+A_{11}(\lambda,x_1)
			\right)=0.
			$$
			Then $x^\varepsilon=(v_\varepsilon,0,0)$, $\varepsilon=\pm 1,$
			are singularities associated with the surface
			$$
			S_2^\varepsilon
			=
			\left\{
			\lambda:
			\gamma_{2\varepsilon}<0,\;
			\gamma_{3\varepsilon}\neq 0,\;
			\gamma_{1\varepsilon}
			+
			\gamma_{2\varepsilon}\gamma_{3\varepsilon}=0
			\right\},
			$$
			where
			$$
			\gamma_{i\varepsilon}
			=
			\frac{\partial f_\lambda}{\partial x_i}
			(\lambda,x^\varepsilon),
			\qquad
			i=1,2,3.
			$$
			The eigenvalues are $r_1=\gamma_{3\varepsilon},$ $r_2=-r_3=i\sqrt{-\gamma_{2\varepsilon}}.$
		\end{enumerate}
		
		\item Two singularities of codimension two.
		
		\begin{enumerate}
			\item For $\lambda\in C_1=\{0\}\times \mathbb{R}_{-}^{\ast}\times \{0\},$
			the singularity $x^2=(0,0,0)$
			has eigenvalues $r_1=0$, $r_2=-r_3=i\sqrt{-\lambda_2}.$
			
			\item For $\lambda\in C_2=\{0\}\times \{0\}\times \mathbb{R}^{\ast},$
			the singularity $x^3=(0,0,0)$ has eigenvalues $r_1=\lambda_3,$ 
			$r_2=r_3=0.$
		\end{enumerate}
	\end{enumerate}
\end{theorem}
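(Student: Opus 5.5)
We work with the normal form \eqref{GSUH5}--\eqref{GSUH6}, to which the germ of $X$ is reduced by the preceding propositions. Singularities of $X_\lambda$ satisfy $x_2=x_3=0$ and $f_\lambda(x_1,0,0)=0$, that is,
$$\lambda_1+x_1^2\bigl(1+A_{11}(\lambda,x_1,0,0)\bigr)=0.$$
Since $A_{11}(0,0)=0$, the factor $1+A_{11}$ is positive near the origin. There is therefore no singularity for $\lambda_1>0$ and only $x=0$ for $\lambda_1=0$. For $\lambda_1<0$ we set $x_1=\sqrt{-\lambda_1}\,u$ and apply the implicit function theorem to $u^2(1+A_{11})=1$. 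This gives two solutions $v_\varepsilon=-\varepsilon\sqrt{-\lambda_1}\,(1+O(|\lambda|^{1/2}))$, which are the points $x^\varepsilon$ of the statement.

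At a singular point $(v,0,0)$ the Jacobian is the companion matrix of $\gamma_1,\gamma_2,\gamma_3$, with $\gamma_i=\partial f_\lambda/\partial x_i$. Its characteristic polynomial is
$$P(r)=r^3-\gamma_3r^2-\gamma_2r-\gamma_1.$$
The whole classification reduces to locating, in parameter space, where $P$ has roots on the imaginary axis, and how many.

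We first treat case 1(a) and the codimension-two cases at the origin. At $x=0$ we have $\gamma_1=0$, $\gamma_2=\lambda_2$ and $\gamma_3=\lambda_3$, because the $A_{ij}$ terms are quadratic in $x$. Hence $P(r)=r\,(r^2-\lambda_3r-\lambda_2)$, so $r_1=0$ and $r_{2,3}=\tfrac12\bigl(\lambda_3\pm\sqrt{\lambda_3^2+4\lambda_2}\bigr)$. These are complex exactly when $4\lambda_2+\lambda_3^2<0$, which gives the stated formula; on the boundary case $4\lambda_2+\lambda_3^2=0$ the two roots coincide at $\lambda_3/2$. On $\lambda_1=0$ with $\lambda_3\ne0$ there is a single zero eigenvalue. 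Nondegeneracy of the saddle-node follows from $\partial^2f/\partial x_1^2=2\ne0$ and from transversality in $\lambda_1$. This gives the surface $S_1$.

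Still at the origin, setting $\lambda_3=0$ with $\lambda_2<0$ gives $P=r(r^2-\lambda_2)$, hence eigenvalues $0,\pm i\sqrt{-\lambda_2}$: a fold–Hopf point, which is the curve $C_1$. Setting $\lambda_2=0$ with $\lambda_3\ne0$ gives $P=r^2(r-\lambda_3)$, a double zero eigenvalue (Bogdanov–Takens type), which is the curve $C_2$. Each of these is codimension two, since two independent conditions on $(\lambda_1,\lambda_2,\lambda_3)$ are imposed.

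For case 1(b), at $x^\varepsilon$ we need $P$ to have a pair of purely imaginary roots $\pm i\omega$ with $\omega\ne0$. Writing $P(r)=(r-r_1)(r^2+\omega^2)$ and comparing coefficients gives $r_1=\gamma_{3\varepsilon}$, $\omega^2=-\gamma_{2\varepsilon}$ and $\gamma_{1\varepsilon}=-r_1\omega^2$. This last relation is $\gamma_{1\varepsilon}+\gamma_{2\varepsilon}\gamma_{3\varepsilon}=0$, and we require $\gamma_{2\varepsilon}<0$ and $\gamma_{3\varepsilon}\ne0$ so that the Hopf point is not degenerate with a zero eigenvalue. This gives $S_2^\varepsilon$ and the eigenvalues $r_1=\gamma_{3\varepsilon}$, $r_{2,3}=\pm i\sqrt{-\gamma_{2\varepsilon}}$.

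To see that $S_2^\varepsilon$ is a genuine smooth surface, we expand $\gamma_{1\varepsilon}=2v_\varepsilon+O(v_\varepsilon^2)$, $\gamma_{2\varepsilon}=\lambda_2+b\,v_\varepsilon+\dots$ and $\gamma_{3\varepsilon}=\lambda_3+c\,v_\varepsilon+\dots$, using \eqref{GSUH6}. In the rescaled coordinates of Lemma \ref{GSUlem1} (where $s^6\mu_1=\lambda_1$) these become smooth functions of $(\mu,s)$, and the condition $\gamma_1+\gamma_2\gamma_3=0$ can be solved for $\mu_2$ or $\mu_3$ by the implicit function theorem.

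The main obstacle is precisely this genericity and transversality verification: showing that the equations defining $S_1$, $S_2^\varepsilon$, $C_1$ and $C_2$ are independent. For $S_2^\varepsilon$ this requires that the derivative of $\gamma_1+\gamma_2\gamma_3$ with respect to $(\lambda_2,\lambda_3)$ is nonzero. I would check this on the principal part $Y_0$ given by Lemma \ref{GSUlem1}, then pass to $X$ for $s\neq0$ by persistence under small perturbations, using the conditions $|b|=1$ and $c\ne0$ to rule out degeneracy.
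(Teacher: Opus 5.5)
Your proposal is correct and takes essentially the same route as the paper. Both locate the equilibria on the $x_1$-axis, compute the characteristic polynomial $r^3-\gamma_3r^2-\gamma_2r-\gamma_1$ of the companion Jacobian, and specialize it either at the origin ($\lambda_1=0$, then $\lambda_3=0$ or $\lambda_2=0$ for $C_1,C_2$) or at $x^\varepsilon$ via a factorization $(r-r_1)(r^2+\omega^2)$; your implicit-function, saddle-node nondegeneracy and rescaled-transversality remarks go beyond what the paper's proof verifies. One sign slip: comparing constant terms gives $-\gamma_{1\varepsilon}=-r_1\omega^2$, so $\gamma_{1\varepsilon}=r_1\omega^2=-\gamma_{2\varepsilon}\gamma_{3\varepsilon}$, not $\gamma_{1\varepsilon}=-r_1\omega^2$ as written, although the relation $\gamma_{1\varepsilon}+\gamma_{2\varepsilon}\gamma_{3\varepsilon}=0$ you conclude is the correct one.
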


\begin{proof}
	The singularities of $X_\lambda$ are the solutions of
	$$
	x_2=0,
	\qquad
	x_3=0,
	\qquad
	f_\lambda(x_1,0,0)=0.
	$$
	Using \eqref{GSUH5}, this last equation becomes
	$$
	\lambda_1
	+
	x_1^2
	\left(
	1+A_{11}(\lambda,x_1,0,0)
	\right)
	=0.
	$$
	
	At a singularity $x^\ast=(v,0,0)$, the Jacobian matrix of $X_\lambda$ is
	$$
	J(\lambda,x^\ast)
	=
	\begin{pmatrix}
		0 & 1 & 0\\
		0 & 0 & 1\\
		\gamma_1 & \gamma_2 & \gamma_3
	\end{pmatrix},
	$$
	where
	$$
	\gamma_i
	=
	\frac{\partial f_\lambda}{\partial x_i}
	(\lambda,x^\ast),
	\qquad
	i=1,2,3.
	$$
	The characteristic polynomial is therefore
	$$
	P(r)
	=
	r^3-\gamma_3r^2-\gamma_2r-\gamma_1.
	$$
	
	At the origin $x^1=(0,0,0)$, when $\lambda_1=0$, one has
	$$
	\gamma_1=0,
	\qquad
	\gamma_2=\lambda_2,
	\qquad
	\gamma_3=\lambda_3.
	$$
	Thus
	$$
	P(r)
	=
	r(r^2-\lambda_3r-\lambda_2).
	$$
	Hence one eigenvalue is $r_1=0,$
	and the other two are
	$$
	r_{2,3}
	=
	\frac{1}{2}
	\left(
	\lambda_3
	\pm
	\sqrt{\lambda_3^2+4\lambda_2}
	\right).
	$$
	If $\lambda_3^2+4\lambda_2\leq 0,$
	then this pair is complex conjugate and may be written as
	$$
	r_{2,3}
	=
	\frac{1}{2}
	\left(
	\lambda_3
	\pm
	i\left|4\lambda_2+\lambda_3^2\right|^{1/2}
	\right).
	$$
	This gives the first codimension-one case.
	
	Now assume $\lambda_1<0$. Then the equation
	$$
	\lambda_1
	+
	x_1^2
	\left(
	1+A_{11}(\lambda,x_1,0,0)
	\right)
	=0
	$$
	has two nearby solutions
	$$
	v_\varepsilon
	=
	-\varepsilon\sqrt{-\lambda_1}
	+
	\text{higher-order terms},
	\qquad
	\varepsilon=\pm 1.
	$$
	Thus there are two singularities $x^\varepsilon=(v_\varepsilon,0,0).$
	At such a point, the eigenvalues are the roots of
	$$
	r^3-\gamma_{3\varepsilon}r^2
	-\gamma_{2\varepsilon}r
	-\gamma_{1\varepsilon}
	=0.
	$$
	If this polynomial has one real eigenvalue $r_1=\gamma_{3\varepsilon}$
	and a purely imaginary pair $r_{2,3}=\pm i\sqrt{-\gamma_{2\varepsilon}},$
	then necessarily $\gamma_{2\varepsilon}<0$
	and $\gamma_{1\varepsilon}+\gamma_{2\varepsilon}\gamma_{3\varepsilon}=0.$
	This gives the surfaces $S_2^\varepsilon$.
	
	Finally, the codimension-two cases occur when degeneracies in the previous
	codimension-one surfaces overlap.
	
	If $\lambda_1=0,$ $\lambda_3=0,$ $\lambda_2<0,$
	then at the origin the characteristic polynomial is $
	P(r)=r(r^2-\lambda_2),$
	and therefore $r_1=0,$ $r_{2,3}=\pm i\sqrt{-\lambda_2}.$
	This gives the curve $C_1$.
	
	If $\lambda_1=0,$ $\lambda_2=0,$ $\lambda_3\neq 0,$
	then $P(r)=r^2(r-\lambda_3),$
	so that $r_1=\lambda_3,$ $r_2=r_3=0.$
	This gives the curve $C_2$.
\end{proof}

\begin{theorem}\label{Theo2_GPU}
	Let $X\in \mathfrak{X}^{m}(\mathbb{R}^{3}\mid \mathbb{R}^{3})$
	satisfy hypotheses \eqref{GSUH1}--\eqref{GSUH3}. For
	$\varepsilon=\pm 1$, let $x^\varepsilon(\lambda)$ denote the two
	singularities of $X_\lambda$ near the origin, and set
	$$
	\gamma_{i\varepsilon}(\lambda)
	=
	\frac{\partial f_\lambda}{\partial x_i}
	\bigl(x^\varepsilon(\lambda)\bigr),
	\qquad i=1,2,3.
	$$
	Define the germs of curves $\Gamma_\varepsilon$ in the parameter space by
	$$
	\Gamma_\varepsilon
	=
	\left\{
	\lambda\in \mathbb{R}^3:
	\exists \alpha\in \mathbb{R},\;
	\gamma_{1\varepsilon}(\lambda)=\varepsilon \alpha^3,\;
	\gamma_{2\varepsilon}(\lambda)=\alpha^2,\;
	\gamma_{3\varepsilon}(\lambda)=\alpha
	\right\}.
	$$
	Then there exists $\alpha_0>0$ such that, for every
	$0<|\alpha|<\alpha_0$ and every $\lambda\in \Gamma_\varepsilon$
	corresponding to $\alpha$, the vector field $X_\lambda$ has exactly two
	singularities near the origin, denoted by
	$x^+(\lambda)$, $x^-(\lambda),$
	and these singularities are hyperbolic saddles.
	
	Moreover, if $\alpha>0$, then
	$$
	W^u(X_\lambda,x^-)
	\cap
	W^s(X_\lambda,x^+)
	\neq \varnothing.
	$$
	If $\alpha<0$, then
	$$
	W^u(X_\lambda,x^+)
	\cap
	W^s(X_\lambda,x^-)
	\neq \varnothing.
	$$
	In both cases, the intersection is transverse.
\end{theorem}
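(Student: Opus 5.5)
The plan is to work with the rescaled unfolding of Lemma~\ref{GSUlem1} rather than with $X_\lambda$ directly. Equilibria, their eigenvalues and the relative position of invariant manifolds are preserved by the orientation-preserving linear maps $\Phi_{\mu,s}$. Multiplying a vector field by $1/s$ only reparametrizes time, but for $s<0$ it reverses orientation of orbits, so I will use $s>0$ or keep track of the exchange of $W^u$ and $W^s$. Hence it suffices to prove the statement for $Y_{\mu,s}$ with $s$ small. Since $\operatorname{sgn}\lambda_1=\operatorname{sgn}\mu_1$, the two equilibria exist only for $\lambda_1<0$. By a further scaling of $s$ I normalize $\mu_1=-1$. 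The equilibria of $Y_{\mu,0}=Y_0$ are then $y^\pm=(\mp1,0,0)$. They persist as $C^{m-1}$ families $y^\pm(\mu,s)$ by the implicit function theorem, since $\partial_{y_1}g=2y_1\neq 0$ there. This gives the claim that there are exactly two singularities, exactly as in the computation of $v_\varepsilon$ in the preceding theorem.

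\textbf{Step 1 (hyperbolicity).} At $x^\varepsilon$ the characteristic polynomial is $P(r)=r^3-\gamma_{3\varepsilon}r^2-\gamma_{2\varepsilon}r-\gamma_{1\varepsilon}$. On $\Gamma_\varepsilon$ this gives $P(r)=\alpha^3Q_\varepsilon(r/\alpha)$ with $Q_\varepsilon(t)=t^3-t^2-t-\varepsilon$.
\begin{itemize}
\item For $\varepsilon=-1$, $Q_{-1}(t)=(t-1)^2(t+1)$, so the eigenvalues are $\alpha,\alpha,-\alpha$.
\item For $\varepsilon=1$, $Q_1$ has one real root $t_0\approx1.84>0$ and a complex pair with real part $(1-t_0)/2<0$.
\end{itemize}
In both cases no eigenvalue has zero real part when $\alpha\neq0$, so both points are hyperbolic saddles. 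For $\alpha>0$ the point $x^+$ has $\dim W^u=1$, $\dim W^s=2$, while $x^-$ has $\dim W^u=2$, $\dim W^s=1$. The dimensions are exchanged when $\alpha<0$. This already shows that $\dim W^u(x^-)+\dim W^s(x^+)=4>3$ for $\alpha>0$, and symmetrically for $\alpha<0$. Transversality is therefore dimensionally possible, and a transverse intersection consists of isolated orbits. As $\alpha$ varies in $(0,\alpha_0)$, these orbits give the announced one-parameter family.

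\textbf{Step 2 (existence of the connection).} Writing the system as $y_1'''=\mu_1+\mu_2y_1'+\mu_3y_1''+y_1^2+O(s)$, I use the function
$$W=y_3-\mu_3y_2-\mu_2y_1 .$$
Along $Y_0$ it satisfies $W'=y_1^2-1$, since the linear terms cancel. For the perturbed field the $O(s)$ terms add a small quadratic correction. I plan to combine $W$ with the energy-type function $E=y_1y_3-\tfrac12y_2^2-\tfrac13y_1^3+y_1$ (suitably corrected by $\mu$-terms) to build a trapping region. The region is a box around the segment joining $y^-$ and $y^+$, and orbits leaving $y^-$ along the relevant unstable direction cannot escape it. A shooting or Wa\.zewski argument on the exit set will then show that some orbit in $W^u(x^-)$ has its $\omega$-limit at $x^+$ (for $\alpha>0$). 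The case $\alpha<0$ follows from the time-reversal symmetry $(y_1,y_2,y_3,t)\mapsto(y_1,-y_2,y_3,-t)$ composed with $\mu_{2}\mapsto\mu_2$, $\mu_3\mapsto-\mu_3$, which maps $\Gamma_\varepsilon$ with parameter $\alpha$ to the one with $-\alpha$.

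\textbf{Step 3 (transversality).} I will compute a Melnikov-type integral along the connection found for the principal part $Y_0$, namely the derivative with respect to $\mu_3$ of the splitting of the one-dimensional manifold relative to the two-dimensional one. I expect it to be nonzero, because $\partial_{\mu_3}$ enters as $\int y_2^2\,dt>0$ weighted by a positive adjoint solution. Then the $C^{m-1}$ dependence on $s$ transfers both existence and transversality to $X_\lambda$ for $0<|\alpha|<\alpha_0$. The main obstacle is Step 2: constructing the trapping region and proving that the orbit really converges to the other saddle rather than escaping or converging to a periodic orbit. If the Lyapunov-type argument fails, I would fall back on a perturbative approach. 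In that approach I take the connection of the limiting nonhyperbolic field at $\alpha=0$ (the Shilnikov-type homoclinic/heteroclinic structure of $y'''=y^2-1$) and continue it in $\alpha$.
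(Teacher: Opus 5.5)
Your Step~1 is correct and matches the paper's hyperbolicity computation (the substitution $r=\alpha q$). Restricting to $s>0$ is more careful than the paper, which ignores that $(\Phi_s)_*(sY_s)=X$ reverses time for $s<0$. The genuine gap is that the real content of the statement is only announced: existence of the connection for the principal part (Step~2) and its transversality (Step~3). The tools you name do not work as stated.

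\textbf{Step~2.} First, $Y_0$ is quasi-homogeneous. The substitution $y_1=\alpha^3z_1$, $y_2=\alpha^4z_2$, $y_3=\alpha^5z_3$, $t=\tau/\alpha$ turns $Y_{0,\mu(\alpha)}$ into the single equation $z'''=z''+z'+z^2-\tfrac14$ for every $\alpha\neq0$, with time reversed when $\alpha<0$. Equivalently, your normalization $\mu_1=-1$ collapses the curve $\mu(\alpha)$ to the two points $(-1,4^{1/3},\pm4^{1/6})$. So there is no small-$\alpha$ limit to continue from, and your fallback is misdirected. The equation $y'''=y^2-1$ is the different parameter value $(\mu_2,\mu_3)=(0,0)$, its equilibria are hyperbolic (eigenvalues are the cube roots of $\pm2$), and carrying its connection over to $(4^{1/3},\pm4^{1/6})$ is exactly the unproved problem.

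Your $E$ is neither conserved nor monotone: already for $\mu_2=\mu_3=0$ one gets $E'=(y_1^2-1)(y_1-y_2)$. A genuine Lyapunov function is $F=y_2y_3-\tfrac{\mu_3}{2}y_2^2-\tfrac13y_1^3-\mu_1y_1$, for which $F'=y_3^2+\mu_2y_2^2\geq0$. Since $\mu_2=\alpha^2>0$, this rules out periodic orbits. By LaSalle it reduces Step~2 to proving that some orbit in the two-dimensional unstable manifold of the left equilibrium stays bounded in forward time. That boundedness is the real global statement, and you construct no trapping region or Wa\.zewski exit set. Your reversing symmetry is also wrong, since it flips the sign of $y_1^2$; the correct one is $(y_1,y_2,y_3,t)\mapsto(-y_1,y_2,-y_3,-t)$ with $\mu_3\mapsto-\mu_3$.

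\textbf{Step~3} does not match your own dimension count. On $\Gamma_\varepsilon$ the unstable manifold of the left equilibrium and the stable manifold of the right one are both two-dimensional, so the connection has codimension zero. Transversality at fixed $\lambda$ then means the adjoint variational equation along the orbit has no nontrivial bounded solution; equivalently, the traces of the two manifolds on a cross-section cross transversally. A Melnikov integral $\int\psi\cdot\partial_{\mu_3}Y\,dt$ requires a bounded adjoint solution $\psi$, i.e.\ a \emph{non}-transverse connection. Its nonvanishing would say that $\mu_3$ unfolds a tangency, not that the intersection is transverse. There is no one-dimensional manifold to split here, and ``I expect it to be nonzero'' is not an argument in any case.

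The paper does not close this gap either. In its proof of Lemma~\ref{GSUlemHetPrincipal}, the transverse connection for $Y_{0,\mu(\alpha)}$ is obtained by invoking that same lemma. Its only non-circular content is your Step~1, together with $C^1$-persistence and the pull-back by $\Phi_s$. A complete proof needs a real analysis of the single equation $z'''=z''+z'+z^2-\tfrac14$: existence, for instance via $F$ combined with a Conley-index or shooting argument, plus a separate fixed-parameter transversality check.
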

Theorem~\ref{Theo2_GPU} comes from thefollowing result.
\begin{lemma}
	\label{GSUlemHetPrincipal}
	There exists $\alpha_0>0$ such that, for every
	$0<|\alpha|<\alpha_0$, the vector field
	$Y_{0,\mu(\alpha)}$
	with
	$$
	\mu(\alpha)
	=
	\left(
	-\frac14\alpha^6,\alpha^2,\alpha
	\right)
	$$
	has two hyperbolic singularities
	$$
	y^\pm
	=
	\left(
	\pm \frac12\alpha^3,0,0
	\right).
	$$
	If $\alpha>0$, then
	$$
	W^u(Y_{0,\mu(\alpha)},y^-)
	\cap
	W^s(Y_{0,\mu(\alpha)},y^+)
	\neq \varnothing.
	$$
	If $\alpha<0$, then
	$$
	W^u(Y_{0,\mu(\alpha)},y^+)
	\cap
	W^s(Y_{0,\mu(\alpha)},y^-)
	\neq \varnothing.
	$$
	Moreover, the corresponding intersection is transverse.
\end{lemma}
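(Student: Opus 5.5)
The plan is to remove the parameter $\alpha$ with the quasi-homogeneous scaling already used in Lemma~\ref{GSUlem1}, which turns the statement into one about a single fixed vector field. For $\alpha\neq 0$ I set $y_1=\alpha^3z_1$, $y_2=\alpha^4z_2$, $y_3=\alpha^5z_3$ and rescale time by $\alpha$. Then $Y_{0,\mu(\alpha)}$ becomes $\alpha$ times
$$
Z=z_2\frac{\partial}{\partial z_1}+z_3\frac{\partial}{\partial z_2}+\Bigl(z_1^2-\tfrac14+z_2+z_3\Bigr)\frac{\partial}{\partial z_3},
$$
that is, the scalar equation $z'''=z''+z'+z^2-\frac14$. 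The factor $\alpha$ is a positive time change when $\alpha>0$. When $\alpha<0$ it is a time reversal, and the linear map $y\mapsto z$ also exchanges the signs of the first coordinates of $y^\pm$.

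Applying these two effects to $W^u(Z,z^-)\cap W^s(Z,z^+)$ gives exactly the two cases of the statement:
\begin{itemize}
\item for $\alpha>0$ we get $W^u(y^-)\cap W^s(y^+)$;
\item for $\alpha<0$ we get $W^u(y^+)\cap W^s(y^-)$.
\end{itemize}
Since the scaling is a linear diffeomorphism, hyperbolicity and transversality are preserved. So it suffices to prove everything for $Z$, and then the conclusion holds for every $\alpha\neq0$, in particular for $0<|\alpha|<\alpha_0$.

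Next I check hyperbolicity of the two singularities of $Z$. They are $z^\pm=(\pm\frac12,0,0)$, with $\gamma_1=2z_1=\pm1$ and $\gamma_2=\gamma_3=1$, so the characteristic polynomial from the proof of the previous theorem is $P(r)=r^3-r^2-r\mp1$.
\begin{itemize}
\item At $z^-$, $P(r)=(r-1)^2(r+1)$. This gives a $2$-dimensional unstable and a $1$-dimensional stable manifold.
\item At $z^+$, $P$ has one real root $r_1\approx1.84$. The remaining pair has product $1/r_1$ and real part $(1-r_1)/2<0$. This gives a $1$-dimensional unstable and a $2$-dimensional stable manifold.
\end{itemize}
Both points are therefore hyperbolic saddles. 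Moreover $\dim W^u(z^-)+\dim W^s(z^+)=4=3+1$, which is the correct count for a transverse intersection along a single orbit.

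The existence of a connecting orbit is the main obstacle. First I would look for an explicit solution by polynomial ansatz in $\tanh(kt)$, i.e.\ $z=a+b\tanh+c\tanh^2+\dots$, matching degrees in $z'''-z''-z'=z^2-\frac14$. If no such closed form exists, I would use a topological shooting argument. Parametrize the local unstable manifold of $z^-$ by a small circle of directions in the $2$-dimensional eigenspace $E^u$; by the Jordan block at the double eigenvalue $1$, orbits leave tangentially to the eigenvector. Then build an isolating block $B$ around $z^+$ in the spirit of Wa\.zewski. Its exit set is made of two pieces, corresponding to escape along the two branches of the $1$-dimensional unstable manifold of $z^+$, one with $z_1$ large and one with $z_1$ returning below $-\frac12$. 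The monotonicity to use is $z_1'=z_2$, $z_2'=z_3$, together with the sign of $z_1^2-\frac14$ on $(-\frac12,\frac12)$, which forces $z_1$ to increase along suitable orbits. I will show that some directions on the circle produce orbits exiting through the first piece and others through the second. By connectedness, some direction gives an orbit that never exits, and that orbit tends to $z^+$.

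For transversality I would show that the connecting orbit $\Gamma$ is isolated in the intersection. Equivalently, the variational equation along $\Gamma$ has only the bounded solution $\dot\Gamma$, up to scalars, or the adjoint equation $w'''+w''-w'=2z(t)w$ has no nonzero solution decaying at both ends. I expect to prove this via a sign or Sturm-type argument on the adjoint equation using $2z(t)\in(-1,1)$, or, failing that, by a rigorous numerical or Melnikov-type verification. Once this is done for $Z$, the scaling gives the statement for every $0<|\alpha|<\alpha_0$.
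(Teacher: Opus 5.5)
Your scaling reduction is correct and sharper than the paper's argument. The paper only substitutes $r=\alpha q$ in the characteristic polynomials to get hyperbolicity of $y^\pm$. You show instead that $Y_{0,\mu(\alpha)}$ is linearly conjugate to $\alpha Z$, so $\alpha_0$ plays no role. Your eigenvalue computations and your bookkeeping for $\alpha<0$ are also right.

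The gap is everything after that. The existence of an orbit in $W^u(Z,z^-)\cap W^s(Z,z^+)$ and its transversality are the whole content of the lemma, and you have not proved either. The concrete tools you name also fail:
\begin{itemize}
\item At $z^+$ the stable eigenvalues are the non-real pair $\approx -0.42\pm 0.61i$, since $r^2+(r_1-1)r+1/r_1$ has negative discriminant. So every orbit of $W^s(Z,z^+)$ spirals in, and $z_1(t)-\frac12$ changes sign infinitely often as $t\to+\infty$.
\item Consequently no polynomial in $\tanh(kt)$ can be the connection, because such a polynomial is eventually monotone.
\item For the same reason, the premise $2z(t)\in(-1,1)$ of your Sturm-type transversality argument is false. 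Separately, the adjoint equation should read $w'''+w''-w'=-2z(t)w$.
\item A Melnikov argument has no small parameter left, precisely because your scaling eliminated $\alpha$.
\item In the Wa\.zewski argument, the decisive claim that some directions exit through each of the two pieces is only announced. Nothing shows that orbits leaving a small circle near $z^-$ even reach the block around $z^+$. This global information on $W^u(Z,z^-)$ is exactly what has to be proved.
\end{itemize}

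The paper does not fill this gap either. Its proof checks the singularities and their hyperbolicity, then cites Lemma~\ref{GSUlemHetPrincipal} itself for the connection, so there is no argument there to borrow.

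A workable starting point is that $\mu_2=\alpha^2>0$ makes the flow gradient-like. The function $L=z_2z_3-\frac12 z_2^2-\frac13 z_1^3+\frac14 z_1$ satisfies $\dot L=z_2^2+z_3^2$ along $Z$, with $L(z^-)=-\frac1{12}<\frac1{12}=L(z^+)$. Hence every forward-bounded orbit of $W^u(Z,z^-)\setminus\{z^-\}$ converges to $z^+$. Existence then reduces to a topological statement: $W^u(Z,z^-)$ contains a bounded orbit. One route is a Conley-index continuation in which the constant $\frac14$ is lowered to a negative value. There the analogous Lyapunov function rules out all bounded orbits. Evaluating $L$ at critical points of $z_1$ gives the a priori bound $|z_1|\le 1$ on connecting orbits, which is what the continuation needs. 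Transversality would still require a separate argument or a computer-assisted proof.
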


\begin{proof}
	We prove the result by reducing the dynamics of $X_\lambda$ to the
	principal part $Y_0$.
	
	First, by Lemma \ref{GSUlem1}, after the weighted change of variables
	$$
	\lambda_1=s^6\mu_1,
	\qquad
	\lambda_2=s^2\mu_2,
	\qquad
	\lambda_3=s\mu_3,
	$$
	and
	$$
	x_1=s^3y_1,
	\qquad
	x_2=s^4y_2,
	\qquad
	x_3=s^5y_3,
	$$
	the unfolding $X$ is orbitally equivalent to an unfolding $Y_s$ of the
	principal part $Y_0$. More precisely, for $s\neq 0$,
	$$
	(\Phi_s)_*Y_s=\frac{1}{s}X.
	$$
	Equivalently,
	$$
	(\Phi_s)_*(sY_s)=X.
	$$
	Thus, up to a smooth change of coordinates and a time reparametrization,
	the local phase portrait of $X_\lambda$ is the same as that of $Y_s$.
	
	Moreover, $Y_s=Y_0+	sR_s,$
	where $R_s$ is of class $C^{m-1}$. Hence $Y_s$ is a $C^1$-small
	perturbation of $Y_0$ when $s$ is sufficiently small.
	
	Now consider
	$$
	\mu(\alpha)
	=
	\left(
	-\frac14\alpha^6,\alpha^2,\alpha
	\right).
	$$
	For this parameter value, the singularities of $Y_{0,\mu(\alpha)}$ are
	given by
	$$
	y^\pm
	=
	\left(
	\pm\frac12\alpha^3,0,0
	\right),
	$$
	because they solve
	$$
	y_2=0,\qquad y_3=0,\qquad
	-\frac14\alpha^6+y_1^2=0.
	$$
	
	At such a singularity, the Jacobian matrix of $Y_{0,\mu(\alpha)}$ is
	$$
	J_\pm
	=
	\begin{pmatrix}
		0 & 1 & 0\\
		0 & 0 & 1\\
		\pm \alpha^3 & \alpha^2 & \alpha
	\end{pmatrix}.
	$$
	Its characteristic polynomial is
	$$
	P_\pm(r)
	=
	r^3-\alpha r^2-\alpha^2r\mp \alpha^3.
	$$
	Putting $r=\alpha q$, we get
	$$
	P_\pm(\alpha q)
	=
	\alpha^3
	\left(
	q^3-q^2-q\mp 1
	\right).
	$$
	For $\alpha\neq 0$, none of these polynomials has roots on the imaginary
	axis. Therefore $y^+$ and $y^-$ are hyperbolic singularities.
	
	By Lemma \ref{GSUlemHetPrincipal}, for $0<|\alpha|<\alpha_0$ the vector
	field $Y_{0,\mu(\alpha)}$ has a transverse heteroclinic orbit joining
	$y^-$ to $y^+$ if $\alpha>0$, and joining $y^+$ to $y^-$ if $\alpha<0$.
	
	Since the intersection of the corresponding invariant manifolds is
	transverse, it persists under sufficiently small $C^1$ perturbations.
	Consequently, for $s$ sufficiently small, the perturbed vector field
	$Y_{s,\mu(\alpha)}$ also possesses a transverse heteroclinic orbit between
	the continuations of these two hyperbolic singularities.
	
	Finally, applying the diffeomorphism $\Phi_s$, the transverse
	heteroclinic orbit of $Y_s$ is sent to a transverse heteroclinic orbit of
	$X_\lambda$, with $\lambda=\Phi_s(\mu(\alpha)).$
	Since diffeomorphisms preserve transversality of intersections of
	submanifolds, we obtain
	$$
	W^u(X_\lambda,x^-)
	\cap
	W^s(X_\lambda,x^+)
	\neq \varnothing
	$$
	for $\alpha>0$, and
	$$
	W^u(X_\lambda,x^+)
	\cap
	W^s(X_\lambda,x^-)
	\neq \varnothing
	$$
	for $\alpha<0$.
	
	The transversality of these intersections follows from the transversality
	for the principal part and from the invariance of transverse intersections
	under diffeomorphisms.
	
	Therefore the germs of curves $\Gamma_\pm$ are contained in the local
	bifurcation set of the unfolding $X$.
\end{proof}

\section{Conclusion and Research Perspectives}

In this work, we constructed a generic three-parameter unfolding of a germ
of three-dimensional vector fields exhibiting a completely degenerate
singularity at the origin. The local analysis carried out in this paper
allowed us to describe the associated bifurcation diagram and to prove the
existence of a one-parameter family of heteroclinic orbits connecting
distinct equilibrium points. These results reveal the richness and
complexity of the local dynamics that may arise near such highly
degenerate singularities.

Nevertheless, several fundamental questions remain open. In particular,
the possible existence of homoclinic orbits has not yet been fully
investigated. Such orbits could lead to more intricate dynamical behavior,
including the emergence of chaotic dynamics. Moreover, a complete
classification of the generic bifurcations occurring in this setting is
still lacking. A refined stratification of the parameter space according
to the different bifurcation types would provide a deeper and more global
understanding of the transitions between the corresponding dynamical
regimes.

Future research may therefore focus on the following directions:
\begin{itemize}
	\item studying the robustness of the bifurcation structures obtained
	under higher-order perturbations;
	\item extending the present results to complex, time-dependent, or
	non-autonomous frameworks;
	\item carrying out a global analysis of invariant manifolds and their
	mutual interactions in a larger neighbourhood of the degenerate
	singularity;
	\item investigating the possible occurrence of homoclinic connections
	and their implications for chaotic dynamics;
	\item developing a complete stratification of the parameter space in
	terms of local and global bifurcation phenomena.
\end{itemize}

These perspectives indicate that completely degenerate singular germs
constitute a rich source of problems at the intersection of bifurcation
theory, qualitative dynamics, and the geometry of dynamical systems.


\begin{thebibliography}{99}

\bibitem{FR} J. R. Francoise, and  R. Roussarie, Bifurcations of Planar Vector Fields, Springer verlag, Ed.1, 1480 (1991).
\bibitem{GP}J. Guckenheimer and P. Holmes, Nonlinear Oscillations, Dynamical Systems, and Bifurcations of Vector Fields, Springer Science, Ed.1, 42 (1983).
\bibitem{PL} L. Perko, Differential Equations and Dynamical Systems, Springer Science, Ed.3, 7(2013). 
\bibitem{i}
A. Sayizonga, and A. Musesa, Sur les modèles Topologiques de germes singuliers de champs de vecteurs sur $\mathbb{R}^{3}$ de jet linéaire nilpotent, Annales de la Faculté des Sciences de l'UNIKIN, 2(2013), 17-20.
\bibitem{iv} A. Sayizonga, Déploiement générique d'un germe singulier de champs de vecteurs sur $\mathbb{R}^{3}$ de codimension Trois, Thèse, University of Kinshasa, 2012.
\bibitem{v} F. Takens, Singularities of Vector Fields; Publ. Math. IHES, 43(1974), 47-100.
\bibitem{TNB4} B. Tangue Ndawa.
Dynamics on Bi-Lagrangian Structures and Cherry maps.  	arXiv:2508.12350.
\end{thebibliography}
\end{document}